\DeclareFontFamily{OT1}{rsfs}{}
\DeclareFontShape{OT1}{rsfs}{n}{it}{<-> rsfs10}{}
\DeclareMathAlphabet{\mathscr}{OT1}{rsfs}{n}{it}
\newtheorem{prop}{Proposition}[section]
\newtheorem{theorem}[prop]{Theorem}
\newtheorem*{main}{Theorem}
\newtheorem{corollary}[prop]{Corollary}
\newtheorem{lemma}[prop]{Lemma}
\newtheorem*{defn*}{Definition}
\newenvironment{Rem}{{\bf Remark.}}{}
\numberwithin{equation}{section}
\author{Farzad Aryan}
\title{On an extension of the Landau-Gonek formula}
\begin{document}
\maketitle
\begin{abstract}
We prove an extension of the Landau-Gonek formula. As an application we recover unconditionally some of the consequences of a pair correlation estimate that previously was known under the Riemann hypothesis. As one corollary we prove that at least two-thirds of the zeros of the zeta function are simple under a zero density hypothesis, which is  weaker than the Riemann hypothesis. The results in this paper can be viewed as pair correlation estimates independent of the Riemann hypothesis.
\end{abstract}
\section{\bf Introduction.}
\vspace{0.5 cm}
For $\textbf{Re}(s)>1,$ the Riemann zeta function is defined as
\begin{equation*}
\zeta(s)=\sum_{n=1}^{\infty}\frac{1}{n^s}.
\end{equation*}
It has an analytic continuation to the whole complex plane except $s=1$ and non-trivial zeros in $0<\textbf{Re}(s)<1,$ known as the critical strip. The location of the zeros of the zeta function is closely tied to the distribution of prime numbers. The closer the zeros are to the critical line ($\textbf{Re}(s)=\tfrac{1}{2}$), the better  the control is over the distribution of the prime numbers. The Riemann hypothesis predicts that all of the non-trivial zeros of the Riemann zeta function are located on this line. 
\\

One of the ways the we can see how the prime numbers are connected to the zeros of zeta function is to look at the Landau-Gonek formula. \\

In 1911 Landau~\cite{Lan} proved that for fixed $x>1$
\begin{equation*}
\sum_{\substack{\zeta(\rho)=0\\ 0<\operatorname{Im}(\rho)<T}}{x^{\rho}}= -\frac{T}{2\pi}\Lambda(x)+ O(\log T).
\end{equation*}
Later Gonek~\cite{Gonek} proved a uniform version of Landau's formula: \\ 

Let $x, T>1,$ and for simplicity consider $x$ to be an integer. Then
\begin{equation}
\label{Gonek}
\sum_{\substack{\zeta(\rho)=0\\ 0<\operatorname{Im}(\rho)<T}}{x^{\rho}}= -\frac{T}{2\pi}\Lambda(x)+ O\big(x\log 2xT \log\log 3x\big).
\end{equation}

Gonek's version shows that one can test whether or not $x$ is a prime number by using an averaging sum involving the zeros of the zeta function. The length of the sum should be around the size of $x$. The formula has applications on the statistical distribution of the zeros of the Riemann zeta function. As one application Gonek gave another proof for the following result of Montgomery \cite{Mo}.
\begin{main}[Montgomery] Assume the Riemann hypothesis and fix $0<a<1.$ Then
\begin{align}
\label{mont-gon}
\sum_{0<\gamma, \gamma^{\prime}<T} \bigg(\frac{\sin(\tfrac{\alpha}{2}(\gamma-\gamma^{\prime})\log T)}{\tfrac{\alpha}{2}(\gamma-\gamma^{\prime})\log T}\bigg)^2 \sim \big(\frac{1}{\alpha}+ \frac{\alpha}{3}\big)\frac{T}{2\pi} \log T.
\end{align}
\end{main}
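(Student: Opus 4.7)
The plan is to reduce the theorem to an evaluation of Montgomery's smoothed pair-correlation function, with (\ref{Gonek}) furnishing the essential arithmetic input.

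First, I would pass, using RH, to the exponential sum $S(x,T):=\sum_{0<\gamma<T}x^{i\gamma}$. Under RH we have $\rho=\tfrac12+i\gamma$, so $x^\rho=\sqrt x\cdot x^{i\gamma}$, and dividing (\ref{Gonek}) through by $\sqrt x$ gives
$$S(x,T)=-\frac{T\,\Lambda(x)}{2\pi\sqrt x}+O\bigl(\sqrt x\,\log(2xT)\log\log(3x)\bigr).$$
This is the only point at which RH is invoked; everything downstream is Parseval-type duality on the distribution of the ordinates $\gamma$.

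Next, I would introduce Montgomery's function
$$F(\alpha,T)=\Bigl(\tfrac{T}{2\pi}\log T\Bigr)^{-1}\sum_{0<\gamma,\gamma'\leq T}T^{i\alpha(\gamma-\gamma')}w(\gamma-\gamma'),\qquad w(u)=\frac{4}{4+u^2},$$
and prove, uniformly for $0\leq|\alpha|\leq 1-\varepsilon$, the asymptotic
$$F(\alpha,T)=T^{-2|\alpha|}\log T\,(1+o(1))+|\alpha|+o(1).$$
Opening the square, the off-diagonal part equals a weighted average of $|S(x,T)|^2$ smoothed around $x=T^{|\alpha|}$; inserting the evaluation above and applying the prime number theorem to $\Lambda(x)^2/x$ produces the $T^{-2|\alpha|}\log T$ term. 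The diagonal $\gamma=\gamma'$, combined with the $w$-smoothing on the error term, yields the $|\alpha|+o(1)$ summand.

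The theorem then follows by Parseval. Normalizing the gap as $\tilde u=(\gamma-\gamma')\log T/(2\pi)$, the summand in (\ref{mont-gon}) equals $K_\alpha(\tilde u):=(\sin(\pi\alpha\tilde u)/(\pi\alpha\tilde u))^{2}$, whose Fourier transform is the tent $\widehat{K_\alpha}(\beta)=\alpha^{-1}(1-|\beta|/\alpha)_+$, supported in $[-\alpha,\alpha]\subset[-1,1]$. The duality underlying the definition of $F$ yields
$$\sum_{0<\gamma,\gamma'\leq T}K_\alpha(\tilde u)\,w(\gamma-\gamma')=\frac{T\log T}{2\pi}\int_{-\infty}^{\infty}\widehat{K_\alpha}(\beta)\,F(\beta,T)\,d\beta.$$
Substituting the asymptotic for $F$, the $T^{-2|\beta|}\log T$ piece concentrates at $\beta=0$ and contributes $1/\alpha+o(1)$, while the $|\beta|$ piece integrates against the tent on $[-\alpha,\alpha]$ to give $\alpha^{-1}\int_{-\alpha}^\alpha(1-|\beta|/\alpha)|\beta|\,d\beta=\alpha/3$. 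Removing the auxiliary weight $w(\gamma-\gamma')$ from the original sum costs only lower order, since $1-w(u)=O(u^2)$ on the scale $|u|\ll 1/\log T$ where the tent localizes the pairs $(\gamma,\gamma')$.

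The principal obstacle is establishing the $F$-asymptotic uniformly on $0\leq\alpha\leq 1-\varepsilon$. Gonek's remainder $O(x\log(2xT)\log\log(3x))$ at $x\asymp T^\alpha$ is only just small enough, after squaring and normalizing by $(T/2\pi)\log T$, to be dominated by the $T^{-2|\alpha|}\log T$ main term, so the off-diagonal evaluation must be carried out with a carefully chosen smoothing in $x$ and cannot be pushed into the exceptional regime $\alpha\to 1$ without additional input (hence the restriction $0<\alpha<1$ in the statement).
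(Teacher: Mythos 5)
The paper does not give its own proof of this theorem: it quotes it as Montgomery's result (noting that Gonek reproved it via the Landau formula), and the paper's original contribution is an unconditional smoothed analogue, Corollary~\ref{1--4}, proved through Theorem~\ref{Thm2}. Your sketch reconstructs the classical conditional argument of Montgomery (and, in the derivation of the $F$-asymptotic, essentially Gonek): use RH to write $x^\rho=\sqrt x\, x^{i\gamma}$, establish $F(\alpha,T)\sim T^{-2|\alpha|}\log T+|\alpha|$ for $|\alpha|<1$, and convolve against the Fej\'er kernel by Parseval; the outline, the duality identity, and the tent-function integrals producing $1/\alpha+\alpha/3$ are all correct. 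The paper takes a genuinely different route: it never forms $F(\alpha,T)$ and never invokes Fourier duality, instead applying a smoothed explicit formula twice --- first with the Fej\'er-type weight $W_\rho(s)$ of Lemma~\ref{lem-Fej}, converting the inner sum over $\rho'$ into a Dirichlet polynomial, then with the Gaussian weight $\omega$ of Lemma~\ref{landau} on the resulting outer sum over $\rho$, and reading the main term off the diagonal. What your route buys is the literal unweighted statement over $0<\gamma,\gamma'<T$ under RH; what the paper's double-explicit-formula route buys is independence of RH, since the residue-theorem manipulations never require the zeros to lie on the critical line --- the price being the Gaussian weight $\omega(\rho)$ on the outer sum, which one would have to unsmooth (and control the non-critical zeros, as is done in Corollaries~\ref{cor-simple} and~\ref{cor-gap} under a density hypothesis) to recover the theorem exactly as stated. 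You correctly flag the $F$-asymptotic as the principal obstacle; note that it cannot be obtained by merely squaring Gonek's pointwise estimate, since at $x\asymp T^{|\alpha|}$ with $|\alpha|$ near $1$ the error $O(x\log 2xT\log\log 3x)$ is of the same order as the main term $T\Lambda(x)/(2\pi)$, so one must return to the explicit formula and exploit cancellation in the $t$-mean square, exactly as Montgomery and Gonek do.
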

In Theorem \ref{Thm2} we will prove the above result unconditionally. Montgomery proved this result by using his work on the pair correlation of the zeros of the Riemann zeta function. \\


To explain how the Landau-Gonek formula is being applied generally, note that often time working with zeros of $L$-function one need to estimate a sum of the type
\begin{equation}
\sum_{\substack{L(\rho)=0 \\ 0< \operatorname{Im}(\rho) <T}} A(\rho)B(1-\rho),
\end{equation}
where $A(s)= \sum a(n)n^{-s}$ and $B(s)= \sum b(n)n^{-s}$  are Dirichlet polynomials of certain length. Estimating the above comes down to calculating $$\sum_{\substack{L(\rho)=0 \\ 0< \operatorname{Im}(\rho) <T}} \Big(\frac{m}{n}\Big)^{\rho},$$
which is the premise of the Landua-Gonek formula. \\

Here, we try to extend the connections between the zeros of the Riemann zeta function and prime  numbers. To explain the result we assume the Riemann hypothesis, although our results are independent of it. Consider a function $C_{\beta}(\rho)$ that counts the number of zeros of the Riemann zeta function that are close (within the radius $\beta$) of $\rho.$
$$C_{\beta}(\rho)=\# \{\rho^{\prime} : |\rho-\rho^{\prime}|\ll \beta\}.$$ Now imagine that we add this function to the Landau-Gonek formula, in a dot product manner  $\langle C_{\beta}(\rho), x^\rho\rangle$, to have
\begin{equation}
\label{new-l.g}
\sum_{\substack{\zeta(\rho)=0\\ 0<\operatorname{Im}(\rho)<T}}C_{\beta}(\rho){x^{\rho}}.
\end{equation}
We show that for $\beta \gg 1$ the above sum is basically the same as the Landau-Gonek formula, however changes start to emerge for $\beta<1$. For small $\beta,$ adding the factor $C_{\beta}$  in the Landau-Gonek formula would extend its support from primes to products of two primes. 

\begin{theorem}
\label{Thm} Let $M=\alpha \sqrt{\log T}$. Also let $\omega$ be a Gaussian cut-off weight centred around $T$ defined as $$\omega(s)=\frac{1}{\sqrt{\pi}\Delta}\hspace{1 mm}\displaystyle{ e^{\frac{(s-({1}/{2}+iT
))^2}{\Delta^2}}},$$ with $\Delta= T (\log T)^{-1}$. Let $x=r/s$ where $r, s$ are integers with $r,s <T^{1-\epsilon}$. For $x>1$ and $\alpha< \epsilon/2,$ we have that
\small
\begin{align*}
 \notag
2  \pi^{3/2} M  \sum_{\zeta(\rho)=0} & x^{\rho}  \sum_{\zeta(\rho^{\prime})=0}   \omega(\rho^{\prime})e^{{M^2(\rho^{\prime}- \rho)^2}} + O\Big(\frac{1}{T}\Big)   \\ & =
\begin{cases}
\log p\log q\big(e^{-\frac{\log^2(p^i)}{4M^2}}+e^{-\frac{\log^2(q^j)}{4M^2}}\big)  &\text{if $x=p^i q^j$},\\
\\
-\log^2 p\bigg(\displaystyle{ \big(1+ e^{-\frac{\log^2 m}{4M^2}}\big)\frac{\log T}{\log p}+ O(\frac{1}{p})\bigg)} &\text{if $x=p^i$},\\
\\
0 &\text{otherwise.}
\end{cases}
\end{align*} 
\normalsize
For the case $x=1,$ we can take $M< \sqrt{\log T}$ and we have
\begin{align}
\label{eqthm}
 \notag 2 \pi^{3/2}M \sum_{\zeta(\rho=0} \omega(\rho)&\sum_{\zeta(\rho^{\prime})=0}     e^{{M^2(\rho^{\prime}- \rho)^2}}   =2 \sum_{n=1}^{\infty} \frac{\Lambda^2(n)}{n}e^{-\frac{\log^2 n}{4M^2}} \\ &+ \sum_{\zeta(\rho)=0}\omega(\rho)\log(\gamma_{\rho})+O(1).
\end{align}
\end{theorem}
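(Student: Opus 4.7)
The plan is to evaluate the inner sum $I(\rho) := \sum_{\zeta(\rho')=0}e^{M^2(\rho'-\rho)^2}$ via the classical contour-integration form of the explicit formula, and then to treat the resulting prime sums by a weighted Landau--Gonek evaluation adapted to the Gaussian weight $\omega$. Concretely, I would start from
\[
I(\rho) = \frac{1}{2\pi i}\oint e^{M^2(s-\rho)^2}\Bigl(-\frac{\zeta'}{\zeta}(s)\Bigr) ds
\]
taken around a tall rectangle with vertical sides at $\operatorname{Re}(s)=c$ ($c>1$) and $\operatorname{Re}(s)=1-c$. Each simple zero $\rho'$ contributes the residue $-e^{M^2(\rho'-\rho)^2}$, the pole at $s=1$ contributes $+e^{M^2(1-\rho)^2}$, and the horizontal segments drop out since $|e^{M^2(s-\rho)^2}|$ decays like $e^{-M^2 R^2}$ on horizontal slices. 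On the right vertical I would expand $-\zeta'/\zeta(s) = \sum_n \Lambda(n) n^{-s}$ and evaluate term-by-term via the Gaussian identity
\[
\frac{1}{2\pi i}\int_{(\sigma)} e^{M^2(s-\rho)^2}\, n^{-s}\,ds \;=\; \frac{n^{-\rho}}{2\sqrt{\pi}\,M}\, e^{-\log^2 n/(4M^2)},
\]
which is independent of $\sigma$. On the left vertical I would apply the functional equation to rewrite $-\zeta'/\zeta(s)$ as its dual Dirichlet series plus Gamma and constant pieces; the Dirichlet series produces a companion prime sum in $n^{\rho-1}$, while shifting the Gamma integral to the critical line and using the concentration of $e^{M^2(s-\rho)^2}$ near $s=\rho$ evaluates it to $\frac{1}{2\sqrt{\pi}M}\bigl[\tfrac{1}{2}\psi(\rho/2)+\tfrac{1}{2}\psi((1-\rho)/2)-\log\pi\bigr]$, which Stirling converts to $\frac{\log\gamma_\rho+O(1)}{2\sqrt{\pi}M}$.

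Assembling the pieces yields a schematic identity
\[
I(\rho) = -\frac{1}{2\sqrt{\pi}M}\sum_n \Lambda(n)\bigl(n^{-\rho}+n^{\rho-1}\bigr) e^{-\log^2 n/(4M^2)} + \frac{\log \gamma_\rho}{2\sqrt{\pi}M} + O\!\bigl(e^{-M^2T^2}\bigr).
\]
Multiplying by $\omega(\rho)$ and summing, the Gamma piece produces the $\sum_\rho \omega(\rho)\log \gamma_\rho$ term on the right-hand side, and it remains to evaluate $\sum_\rho \omega(\rho) n^{\pm\rho}$ for each fixed $n$. I would run the same contour argument on $\omega(s) y^s (-\zeta'/\zeta(s))$ and use the closed-form evaluation
\[
\frac{1}{2\pi i}\int_{(\sigma)} \omega(s)\, y^s\,ds \;=\; \frac{y^{1/2}}{2\pi}\, e^{iT\log y}\, e^{-\Delta^2\log^2 y/4},
\]
again independent of $\sigma$. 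Because $\Delta=T/\log T$ is enormous, the Gaussian factor $e^{-\Delta^2\log^2 y/4}$ is exponentially small unless $|\log y|=O(1/\Delta)$; hence for $y=n$ or $y=1/n$ only the diagonal $m=n$ in the resulting prime expansion survives, and the outer cut-off $e^{-\log^2 n/(4M^2)}$ with $M<\sqrt{\log T}$ confines $n$ well below $T^{1-\epsilon}$, making all neighbouring-$n$ terms exponentially small. What is left is a clean diagonal contributing a multiple of $\Lambda(n)/n$, which combines with the outer $\Lambda(n)$ to give the advertised $\sum_n \Lambda^2(n)/n\cdot e^{-\log^2 n/(4M^2)}$ after multiplication by $2\pi^{3/2}M$.

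The main obstacle is the careful bookkeeping of all sub-leading contributions so that they collect into $O(1)$. Shifting the Gamma integral past the simple pole of $\tfrac12\psi(s/2)$ at $s=0$ introduces a residue of size $|\omega(0)|=O(e^{-\log^2 T})$; the trivial zeros contribute $\sum_k e^{M^2(-2k-\rho)^2}\ll e^{-M^2T^2}$; and the Stirling error $O(1/\gamma_\rho)$ in replacing digamma values by $\log \gamma_\rho$ leaves a per-zero slack of $O(1/M)$, which after summation against $\omega$ and multiplication by $2\pi^{3/2}M$ must be controlled using the $L^1$-normalization $\int \omega = 1$ together with the Riemann--von Mangoldt count $\sum_\rho \omega(\rho)\ll \log T$. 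Finally, the tails of neighbouring-$n$ terms in the weighted Landau--Gonek have to be estimated uniformly for $n\le e^M$. Unconditionality comes essentially for free: the kernel $e^{M^2(s-\rho)^2}$ is entire in $s$ and the argument at no point requires $\operatorname{Re}(\rho)=1/2$.
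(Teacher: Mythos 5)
Your proposal follows essentially the same route as the paper's proof: you express the inner sum over $\rho'$ as a contour integral against $e^{M^2(s-\rho)^2}$ (the paper's Lemma~\ref{zero-isolation}), split it into prime sums and a $\chi'/\chi$ piece via the functional equation and the Gaussian Mellin identity (the paper's Lemma~\ref{winvmt}), multiply by $\omega(\rho)$ and sum, and then evaluate the resulting $\sum_\rho \omega(\rho) y^\rho$ by a second contour argument (the paper's smooth Landau formula, Lemma~\ref{landau}), with $\Delta=T/\log T$ forcing only diagonal terms to survive. Your treatment of the Gamma factor via the digamma/Stirling expansion concentrated near $s=\rho$ is likewise the same mechanism the paper uses through the asymptotic \eqref{ass-exp-chi} and Lemma~\ref{lemm-Land-log}, and your closing observation that unconditionality comes from the entirety of the kernel is precisely what underlies the paper's argument.
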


\begin{Rem}
In the LHS of \eqref{eqthm} if we assume the Riemann hypothesis $\rho^{\prime}- \rho$ has no real part and therefore $e^{{M^2(\rho^{\prime}- \rho)^2}}<1$. On the other hand if we have a zero off the critical line this zero will have a large positive contribution to the LHS of \eqref{eqthm} and this contribution can only be affected by zeros that are very close to the zero off the critical line. Now since the RHS of \eqref{eqthm} is bounded by $O(\log T)^2$ the theorem suggest that we cannot have many violation of the Riemann hypothesis. This gives us a weaker version of the Selberg zero density theorem which we put in Corollary \ref{corr}.
\end{Rem}
\\

\begin{Rem}
We can have a similar result with considering two different $L$-functions. For example the sum over $\rho$ can run among the  zeros of the Riemann zeta function and the sum over $\rho^{\prime}$ can be considered to run among the zeros of a Dirichlet $L$-function. \\ 
\end{Rem}

In terms of distribution of imaginary parts of zeros of the zeta function on the torus $\mathbb{R}/\mathbb{Z}$, the Landau-Gonek formula implies that the sequence $\{\alpha \gamma\}$ is uniformly distributed. However when $\alpha$ is very close to $k \log p$ we see some accumulation of points around $-1.$ Ford, Soundararajan, and Zaharescu in~\cite{FZ, FKZ} made this notion more precise. It is interesting that adding an statistical factor to the LHS of the Landau-Gonek formula would result in an arithmetic changes in the RHS. However it is not clear that what it would say regarding the distribution of $\{\alpha \gamma\}$ on the torus. \\

Another way of looking at the distribution of the zeros comes from studying the pair correlation between zeros of the zeta function. Montgomery conjecture~\cite{Mo} states that the pair correlation between pairs of zeros of the Riemann zeta function follows the same distribution as the pair correlation between the eigenvalues of a random Hermitian matrices. More precisely assuming the Riemann hypothesis the Montgomery conjecture asserts that for a Schwartz function $\phi$
\begin{equation}
 \sum_{\substack{0< \gamma, \gamma^\prime< T \\ 0< \gamma- \gamma^\prime < \frac{2\pi \alpha}{\log T}}} \phi\big((\gamma- \gamma^\prime)\frac{\log T}{2\pi}\big) \sim \frac{T}{2\pi}\log T \int_{0}^{\alpha} \phi(x)\bigg(1- \big(\frac{\sin(\pi x)}{\pi x}\big)^2 \bigg)dx.
\end{equation} \\
Regarding the zero free region for the zeros of the Riemann zeta function, by using the Euler product and the functional equation for the zeta function one can show that the zeta function has no non-trivial zeros outside of $0\leq \textbf{Re}(s) \leq 1,$ known as the critical strip. By the work Hadamard and  de la Vall\'{e}e-Poussin we can show that there is no zeros on the line $\textbf{Re}(s)=1$ and do slightly better to show the following zero free region
\begin{equation}
\label{zero-free}
\{\zeta(s) \neq 0 \text{ for } \sigma >1-\frac{c}{\log(|t|+2)} \}.
\end{equation}   Beside improvements on the power of $\log t$, the zero free region \eqref{zero-free} is the best we can get with current method. \\

A natural question is if there exist zeros off the critical line, what would they look like? We show that if these counter examples to the Riemann hypothesis are not too close to each other, we cannot have too many of them.

\begin{corollary}
\label{corr}
For $\epsilon>0,$ let $N$ be the number of zeros of the Riemann zeta function that are distant $(\log T)^{-{1}/{2}+\epsilon}$ apart from the critical line. Moreover, assume that either the distance between imaginary parts of these zeros are bigger than $(\log T)^{-{1}/{2}+\epsilon}$ or the distance between the real part of them is bigger than $(\log T)^{-1+\epsilon}.$ Then we have that $$N\ll \frac{T}{e^{(\log T)^{\epsilon}}}.$$
\end{corollary}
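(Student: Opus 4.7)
The plan is to apply identity \eqref{eqthm} at height $T$ with the parameter $M=\alpha\sqrt{\log T}$ for a small constant $\alpha=\alpha(\epsilon)>0$, and to read off an upper bound on $N$ by exhibiting a large lower bound on the LHS from the off-line zeros. First I would bound the right-hand side of \eqref{eqthm}. Partial summation from Mertens' theorem $\sum_{n\leq x}\Lambda^{2}(n)/n\sim \tfrac12(\log x)^{2}$, together with the Gaussian cutoff at $n\asymp e^{2M}$, shows that $2\sum_{n}\Lambda^{2}(n)n^{-1}e^{-\log^{2}n/(4M^{2})}=O(M^{2})=O(\log T)$. The weighted zero sum $\sum_{\rho}\omega(\rho)\log\gamma_{\rho}$ is dominated by the window of size $\Delta=T/\log T$ around $T$ where $\omega$ is concentrated, which contains $\asymp T/(2\pi)$ zeros each of weight $\asymp 1/\Delta=\log T/T$, and each factor $\log\gamma_{\rho}\asymp\log T$, producing a contribution $O(\log^{2}T)$. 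Altogether, $\text{RHS}=O(\log^{2}T)$.

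Next I would lower bound the left-hand side using mirror pairings. For every off-line zero $\rho=\sigma+i\gamma$ with $|\sigma-\tfrac12|\geq\delta:=(\log T)^{-1/2+\epsilon}$, the functional equation and complex conjugation symmetry produce the companion zero $\rho^{\prime}=1-\overline{\rho}=(1-\sigma)+i\gamma$. Then $\rho^{\prime}-\rho=1-2\sigma$ is real, so
\[
e^{M^{2}(\rho^{\prime}-\rho)^{2}}=e^{4M^{2}(\sigma-1/2)^{2}}\geq e^{4M^{2}\delta^{2}}=e^{4\alpha^{2}(\log T)^{2\epsilon}}.
\]
For $\gamma$ in the window of $\omega$ one has $\omega(\rho)\gg\log T/T$, so each such mirror pair contributes at least $c\,M(\log T/T)e^{4\alpha^{2}(\log T)^{2\epsilon}}$ to the LHS. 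Summed over all $N$ off-line zeros this gives a total positive contribution of size
\[
\gg N\,\frac{(\log T)^{3/2}}{T}\,e^{4\alpha^{2}(\log T)^{2\epsilon}}.
\]

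The remaining step, and the main technical point, is to show that the other contributions to the LHS cannot cancel this mirror contribution. I would split the remaining pairs into: (i) pairs $(\rho,\rho^{\prime})$ where $\rho^{\prime}$ is either equal to $\rho$ or a near-critical-line zero within imaginary distance $1/M$ of $\rho$; for these $\mathrm{Re}\,M^{2}(\rho^{\prime}-\rho)^{2}\leq O(1)$, so the total is bounded by $O(\log T)$ uniformly in $\rho$; and (ii) cross pairs of two distinct off-line zeros $\rho_{i},\rho_{j}$. For (ii) the separation hypothesis is used decisively: if $|\gamma_{i}-\gamma_{j}|>(\log T)^{-1/2+\epsilon}$ then $\mathrm{Re}\,M^{2}(\rho_{i}-\rho_{j})^{2}\leq M^{2}(\sigma_{i}-\sigma_{j})^{2}-\alpha^{2}(\log T)^{2\epsilon}$ and a matching bound holds from the mirror cross-pair, while if instead $|\sigma_{i}-\sigma_{j}|>(\log T)^{-1+\epsilon}$ then after including all four cross terms $(\rho_{i},\rho_{j}),(\rho_{i},1-\overline{\rho_{j}}),(1-\overline{\rho_{i}},\rho_{j}),(1-\overline{\rho_{i}},1-\overline{\rho_{j}})$ the nonzero real parts force oscillations that, once averaged against $\omega(\rho)$, contribute $O(1)$ per pair. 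In both cases the total cross contribution from the $\binom{N}{2}$ off-line pairs is at most $O(N^{2})$, which is negligible compared to the main mirror term as long as $N\leq T/e^{(\log T)^{\epsilon}}$.

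Combining the lower and upper bounds gives $N\,(\log T)^{3/2}T^{-1}\,e^{4\alpha^{2}(\log T)^{2\epsilon}}\ll \log^{2}T$, hence $N\ll T(\log T)^{1/2}e^{-4\alpha^{2}(\log T)^{2\epsilon}}\ll T/e^{(\log T)^{\epsilon}}$, which is the desired bound. The main obstacle will be the bookkeeping in step (ii): verifying that the real parts of the complex cross-exponents $M^{2}(\rho_{i}-\rho_{j})^{2}$ are controlled uniformly under each clause of the separation hypothesis, and that the separation is tight enough — but no tighter than needed — to keep the cross contribution strictly smaller than the mirror main term after choosing $\alpha=\alpha(\epsilon)$ appropriately small.
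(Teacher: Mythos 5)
Your overall approach is the same as the paper's very brief argument: apply \eqref{eqthm} at height $T$, bound the right-hand side by $O(\log^2 T)$, exhibit a large positive mirror contribution $\omega(\rho)\,e^{M^2(1-\bar\rho-\rho)^2}=\omega(\rho)\,e^{4M^2(\sigma-1/2)^2}$ from each off-line zero, and argue via the separation hypothesis that this cannot be cancelled by other terms. Your computations for the right-hand side ($O(M^2)+O(\log^2 T)$) and for the size of each mirror term ($\gg M(\log T/T)e^{4\alpha^2(\log T)^{2\epsilon}}$) are correct and in fact tidier than the paper's, which loosely calls the per-zero contribution ``$T^\epsilon$.''

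However, your cross-term bookkeeping in steps (i)--(ii) contains a genuine error. In step (i), if $\rho$ is off-line with $|\sigma-\tfrac12|\geq\delta$ and $\rho'$ is a near-critical-line zero at imaginary distance $<1/M$, then $\operatorname{Re}\,M^{2}(\rho'-\rho)^{2}=M^{2}\bigl[(\sigma'-\sigma)^{2}-(\gamma'-\gamma)^{2}\bigr]$ can be as large as $M^{2}(\sigma-\tfrac12)^{2}\gg\alpha^{2}(\log T)^{2\epsilon}$, which is not $O(1)$; these on-line/off-line cross terms must be dominated by the mirror term directly (their magnitude is $\leq e^{M^2\sigma^2}$, the mirror is $e^{4M^2\sigma^2}$, and the $\asymp\sigma\log T$ nearby on-line zeros lose the factor $e^{-3M^2\sigma^2}$). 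More seriously, in step (ii) the claim that the off-line cross pairs contribute ``$O(N^{2})$, which is negligible compared to the main mirror term as long as $N\leq T/e^{(\log T)^{\epsilon}}$'' is circular and numerically wrong: the weighted cross contribution is $O(N^{2}(\log T)^{3/2}/T)$ while the mirror main term is $\gg N(\log T)^{3/2}T^{-1}e^{4\alpha^{2}(\log T)^{2\epsilon}}$, so this comparison would only tolerate $N\ll e^{4\alpha^{2}(\log T)^{2\epsilon}}$, a far weaker bound than the target. The correct use of the separation hypothesis is that it makes the cross contribution a \emph{vanishing fraction} of the mirror contribution regardless of $N$: order the off-line ordinates, so $|\gamma_{i}-\gamma_{j}|\geq|i-j|\delta$, apply $|\,e^{M^2(\rho_j-\rho_i)^2}|\leq\sqrt{m_i m_j}\,e^{-M^{2}(\gamma_i-\gamma_j)^{2}}$ with $m_i=e^{4M^2\sigma_i^2}$, and sum over $k=i-j$ via Cauchy--Schwarz to get $\sum_{i\neq j}|\mathrm{cross}_{ij}|\ll\bigl(\sum_i m_i\bigr)\sum_{k\geq1}e^{-\alpha^{2}k^{2}(\log T)^{2\epsilon}}=o(1)\sum_i m_i$. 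With that replacement, the combination of your steps 1 and 2 does yield $N\ll T(\log T)^{3/2}e^{-4\alpha^{2}(\log T)^{2\epsilon}}\ll T/e^{(\log T)^{\epsilon}}$ once summed over $O(\log T)$ windows of width $T/\log T$ covering $[0,T]$.
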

\begin{Rem}
Assuming that zeros do not cluster is somehow a troublesome condition since this is one of the difficulties in dealing with the zeros of the Riemann zeta function. However, we put this corollary here to show a quick application of the first theorem for getting some zero density estimates.\\

\end{Rem}

Next we will prove another theorem, that allow us to look more closely at the microscopic behaviour of the zeros of the Riemann zeta function.
\subsection{\bf Fejer Kernel} In this section we prove a similar result to the Theorem~\ref{Thm} with a different weight and we consider its applications.
\begin{theorem}
\label{Thm2}
For $\alpha<1,$ let
\begin{equation}
\label{def-W}
W_{\rho}(s):= \Big(\frac{T^{\alpha\tfrac{s-\rho}{2}}-T^{-\alpha\tfrac{s-\rho}{2}}}{s-\rho}\Big)^2
\end{equation}
Let $x=r/s$ where $r, s$ are integers with $r,s <T^{1-\alpha}{\log^{-5} T}$. For $x>1$ we have that
\small
\begin{align*}
 \notag
 \sum_{\zeta(\rho)=0} & \omega(\rho)x^{\rho}  \sum_{\zeta(\rho^{\prime})=0}   W_\rho(\rho^{\prime})   + O(\frac{1}{T^{1-\alpha}}) \\ & =
\begin{cases}
\frac{\log p\log q}{2\pi}\Big(\log \big(\frac{T^\alpha}{p^i}\big)\mathds{1}_{p^i< T^\alpha}+ \log\big(\frac{T^\alpha}{p^i}\big)\mathds{1}_{q^i< T^\alpha}\Big)  &\text{if $x=p^i q^j$},\\
\\
-\frac{ \log T\log p}{2\pi}\Big(\alpha \log T-\log\big(\frac{T^\alpha}{p^i}\big)\mathds{1}_{p^i< T^\alpha}\Big) &\text{if $x=p^i$},\\
\\
\frac{\log p\log q}{2\pi q^j}\Big(\log \big(\frac{T^\alpha}{p^i}\big)\mathds{1}_{p^i< T^\alpha}+ \log \big(\frac{T^\alpha}{q^i}\big)\mathds{1}_{q^i< T^\alpha}\Big)  &\text{if $x=p^i q^{-j}$},\\
\\
0 &\text{otherwise.}
\end{cases}
\end{align*}
\normalsize
For the case $x=1,$  we have
\begin{align}
\label{eqthmx=1}
  \sum_{\zeta(\rho)=0}  \omega(\rho)  \sum_{\zeta(\rho^{\prime})=0}   W_\rho(\rho^{\prime})    = \frac{1}{6\pi}(\alpha \log T)^3 + \frac{\alpha}{2\pi} \log^3 T + O(1).\\ \notag
\end{align}
\end{theorem}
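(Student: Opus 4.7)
My strategy follows the proof of Theorem~\ref{Thm}, with the Gaussian inner kernel replaced by the Fej\'er-type weight $W_\rho$. The advantage of $W_\rho$ is that it is the Fourier transform of a compactly supported triangular function; its support $[-\alpha\log T,\alpha\log T]$ is precisely what produces the truncation $n\le T^\alpha$ and the linear factor $\log(T^\alpha/n)$ visible on the right-hand side. The key starting identity is the Laplace representation
\[
W_\rho(s) = (\log T)^2 \int_{-\alpha}^{\alpha}(\alpha - |w|)\,T^{w(s-\rho)}\,dw,
\]
obtained by writing $(T^{\alpha(s-\rho)/2}-T^{-\alpha(s-\rho)/2})/(s-\rho) = \log T\int_{-\alpha/2}^{\alpha/2}T^{u(s-\rho)}\,du$ and expanding the square as a self-convolution of the triangular indicator. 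This decomposes $W_\rho$ as a continuous superposition of pure exponentials $T^{w(s-\rho)}$ with $|w|\le\alpha$, each of which is exactly the object handled by the Landau--Gonek formula.

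The next step is to evaluate the inner sum $\sum_{\rho'}W_\rho(\rho')$ by integrating $(\zeta'/\zeta)(s)\,W_\rho(s)$ around a tall rectangle, using the Dirichlet series $-\zeta'/\zeta(s) = \sum_n \Lambda(n)/n^s$ on the right edge and the functional equation on the left. Because $W_\rho$ has Paley--Wiener support of width $\alpha\log T$, the resulting prime sum is automatically truncated at $n\le T^\alpha$ and weighted by the triangular factor $\log(T^\alpha/n)$, giving an expression of the shape
\[
\sum_{\rho'}W_\rho(\rho') = c_1\alpha(\log T)^2 \log|\gamma_\rho| - 2\pi\sum_{n\le T^\alpha}\frac{\Lambda(n)\log(T^\alpha/n)}{\sqrt n}\bigl(n^{i\gamma_\rho}+n^{-i\gamma_\rho}\bigr)+O(1).
\]
Substituting this into the outer sum and using $x^\rho n^{\pm i\gamma_\rho} = \sqrt x\,(xn^{\pm 1})^{i\gamma_\rho}$ on the critical line reduces the problem to evaluating Landau--Gonek sums of the form $\sum_\rho \omega(\rho)(xn^{\pm 1})^\rho$ with $n\le T^\alpha$, to which the $\omega$-weighted formula of Theorem~\ref{Thm} applies directly. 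These sums have main-term support only when $xn^{\pm 1}$ is a rational prime power, and this is what dictates the four arithmetic cases of the statement: for $x = p^iq^j$ we take $n = q^j$ or $n = p^i$ to make $x/n$ a prime power, producing the two $\log p\log q$ contributions with weights $\log(T^\alpha/p^i)$ and $\log(T^\alpha/q^j)$; for $x = p^iq^{-j}$ we take $n = q^j$ so that $xn = p^i$ is a prime power, yielding the ``reciprocal'' case newly visible because the Fej\'er kernel symmetrically sees both $n$ and $1/n$; for $x = p^i$ the contribution arises from pairing the $\log|\gamma_\rho|$ main term with the Landau--Gonek mass at $x$ itself; and for $x = 1$ the cubic term $\tfrac{1}{6\pi}(\alpha\log T)^3$ is an $L^2$-evaluation of the triangular Fej\'er weight against the density of zeros, while $\tfrac{\alpha}{2\pi}\log^3 T$ comes from the $\log|\gamma_\rho|$ main term integrated against $\omega$.

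The principal technical obstacle is maintaining uniform error control: the restriction $r,s<T^{1-\alpha}\log^{-5}T$ is essentially sharp, since the shifted arguments $xn^{\pm 1}$ with $n\le T^\alpha$ must remain in the range where the Gonek error $O(y\log(2yT)\log\log(3y))$ from \eqref{Gonek} is absorbed by the main term, and the factor $\log^{-5}T$ is precisely what compensates for the extra $(\log T)^2$ introduced by the $W_\rho$ representation. Making the contour shifts, the interchange of sum and integral, and the arithmetic matching fully rigorous at this precision is the main bookkeeping task, but introduces no new ideas beyond those already developed for Theorem~\ref{Thm}.
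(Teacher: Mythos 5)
Your proof plan follows the same overall route as the paper: obtain an explicit formula for the inner sum $\sum_{\rho'}W_\rho(\rho')$ by contour integration against $\zeta'/\zeta$, producing a prime sum truncated at $T^\alpha$ with the triangular weight $\log(T^\alpha/n)$; multiply by $\omega(\rho)x^\rho$, sum over $\rho$, and apply the $\omega$-weighted Landau formula to the resulting sums; and treat the $\chi'/\chi$ contribution by shifting the contour and using the asymptotic expansion. Your Laplace representation $W_\rho(s)=(\log T)^2\int_{-\alpha}^{\alpha}(\alpha-|w|)T^{w(s-\rho)}\,dw$ is correct and is the same mathematical content as the paper's Lemma~\ref{Fejer}, which decomposes $W_\rho$ into $Q^{s-\rho}(s-\rho)^{-2}+Q^{\rho-s}(s-\rho)^{-2}-2(s-\rho)^{-2}$ and applies a Perron formula; the two are interchangeable, so this is not a genuinely different approach.

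There is, however, a real gap. The displayed intermediate formula
\[
\sum_{\rho'}W_\rho(\rho') = c_1\alpha(\log T)^2 \log|\gamma_\rho| - 2\pi\sum_{n\le T^\alpha}\frac{\Lambda(n)\log(T^\alpha/n)}{\sqrt n}\bigl(n^{i\gamma_\rho}+n^{-i\gamma_\rho}\bigr)+O(1)
\]
and the subsequent step ``$x^\rho n^{\pm i\gamma_\rho} = \sqrt{x}\,(xn^{\pm 1})^{i\gamma_\rho}$ on the critical line'' are both valid only when $\rho=\tfrac12+i\gamma_\rho$, i.e.\ under the Riemann hypothesis. But the entire point of Theorem~\ref{Thm2} (and of the abstract) is that it is unconditional. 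The correct, RH-free form of the explicit formula is the one in Lemma~\ref{lem-Fej}: the two sums appear as $\sum_n\Lambda(n)n^{-\rho}\log(T^\alpha/n)$ and $\sum_n\Lambda(n)n^{-(1-\rho)}\log(T^\alpha/n)$, and multiplying by $\omega(\rho)x^\rho$ produces $\sum_\rho\omega(\rho)(x/n)^\rho$ and $\sum_\rho\omega(\rho)(xn)^\rho$ with no assumption on where $\rho$ lies. Your conclusion is salvageable by rewriting the intermediate step in this $\rho,1-\rho$ form, but as written your proposal only proves the theorem conditionally, which undercuts its stated purpose.

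Two smaller inaccuracies: the Landau sums $\sum_\rho\omega(\rho)y^\rho$ are evaluated by Lemma~\ref{landau}, not by Theorem~\ref{Thm}; and the constraint $r,s<T^{1-\alpha}\log^{-5}T$ is there to guarantee $sn<T\log^{-5}T$ so that the \emph{Gaussian} decay $e^{-\Delta^2\log^2(\cdot)}$ with $\Delta=T/\log T$ from Lemma~\ref{landau} kills the off-diagonal terms --- not to absorb the Gonek error $O(y\log 2yT\log\log 3y)$ from \eqref{Gonek}, which is far too crude at this precision. Also the coefficient of $\log|\gamma_\rho|$ should carry a single factor of $\alpha\log T$ (since $\int_{(0)}W_0(z)\,dz/(2\pi i)=\alpha\log T$), not $(\log T)^2$.
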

By setting $z= (s-\rho)/2,$ we get that $W_\rho(s)$ in \eqref{def-W} equals to $$\Big(\frac{\sin(\alpha \log T z)}{z}\Big)^2,$$  using the complex definition of the Sine function. Therefore we can write \eqref{eqthmx=1} as the following.
\begin{corollary} \label{1--4} Let $0< \alpha<1,$ and consider the Sine function as a complex valued function. We have  that
\begin{align*}
\label{mont-gon}
\sum_{\zeta(\rho)=0 } \omega(\rho) \sum_{\zeta(\rho^{\prime})=0} \bigg(\frac{\sin(\tfrac{\alpha}{2}(\rho-\rho^{\prime})\log T)}{\tfrac{\alpha}{2}(\rho-\rho^{\prime})\log T}\bigg)^2 = \frac{\log T}{2\pi}\big(\frac{1}{\alpha}+ \frac{\alpha}{3}\big) + O\big(\frac{1}{\log^2 T}\big).
\end{align*}
\end{corollary}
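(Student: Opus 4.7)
The plan is to derive Corollary \ref{1--4} directly from equation \eqref{eqthmx=1} of Theorem \ref{Thm2} by a short algebraic manipulation; no further analytic input is needed.

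First I would make the substitution $z = (\rho' - \rho)/2$ in the weight $W_\rho$ defined by \eqref{def-W}. Writing $T^{\alpha z} = e^{\alpha z \log T}$ and invoking the complex-analytic definition of the sine function, this gives
\begin{equation*}
W_\rho(\rho') = \left(\frac{T^{\alpha z} - T^{-\alpha z}}{2z}\right)^2 = \left(\frac{\sin(\alpha z \log T)}{z}\right)^2,
\end{equation*}
which is exactly the identity stated in the paragraph preceding Corollary \ref{1--4}. Pulling the factor $(\alpha \log T)^2$ out of the numerator yields
\begin{equation*}
W_\rho(\rho') = (\alpha \log T)^2 \left(\frac{\sin\bigl(\tfrac{\alpha}{2}(\rho-\rho')\log T\bigr)}{\tfrac{\alpha}{2}(\rho-\rho')\log T}\right)^2,
\end{equation*}
which is $(\alpha \log T)^2$ times the summand appearing on the left-hand side of the corollary.

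Next I would substitute this back into \eqref{eqthmx=1} and divide both sides by $(\alpha \log T)^2$. The main term becomes
\begin{equation*}
\frac{1}{(\alpha \log T)^2}\left[\frac{(\alpha \log T)^3}{6\pi} + \frac{\alpha \log^3 T}{2\pi}\right] = \frac{\alpha \log T}{6\pi} + \frac{\log T}{2\pi \alpha} = \frac{\log T}{2\pi}\left(\frac{1}{\alpha} + \frac{\alpha}{3}\right),
\end{equation*}
while the $O(1)$ error becomes $O((\alpha \log T)^{-2}) = O(\log^{-2} T)$ since $\alpha$ is fixed in $(0,1)$. This reproduces the asserted asymptotic.

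Since the deduction is essentially formal, there is no real obstacle here: the substantive work has already been done in the proof of Theorem \ref{Thm2}. The only minor points requiring care are the interpretation of $\sin$ as an entire function of a complex variable (so that the identity above is meaningful when $\rho-\rho'$ is not purely imaginary) and the application of the factor $(\alpha \log T)^{-2}$ to the $O(1)$ error term.
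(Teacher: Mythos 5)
Your proposal is correct and is exactly the argument the paper uses: identify $W_\rho(\rho')$ with $(\alpha\log T)^2$ times the complexified Fej\'er kernel via the substitution $z=(\rho'-\rho)/2$, then divide both sides of \eqref{eqthmx=1} by $(\alpha\log T)^2$. The only thing worth noting is that, strictly speaking, $T^{\alpha z}-T^{-\alpha z}=2\sinh(\alpha z\log T)$ rather than $2\sin(\alpha z\log T)$; the paper itself glosses over this $i$-factor (it is harmless after squaring under RH, where $z$ is purely imaginary), and your write-up inherits the same informality.
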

Corollary \ref{1--4} is the unconditional version of \eqref{mont-gon}. Rudnick and Sarnak~\cite{RS} also proved pair correlation results unconditionally, assuming the weight satisfy certain conditions including exponential decay. The Fejer Kernel dose not satisfy the exponential decay.  \\ 

A major application of Montgomery's theorem \eqref{mont-gon} was to the problem of simple zeros of the Riemann zeta function. The conjecture is that all of the zeros are simple. Montgomery showed that under the assumption of the Riemann hypothesis at least two-thirds of the zeros  are simple. There were several improvements to this result, the best known conditional lower bound is by by Bui and Heath-Brown ~\cite{BC} that shows that at least $70\%$ of the zeros  are simple . Unconditionally, the best result is due to Bui, Conrey, and Young~\cite{BCY}. They show that more than $41\%$ of the zeros are simple.\\

Similar to  Corollary~\ref{corr}, we can relax the assumption of the Riemann hypothesis and get  Montgomery's result assuming there are not too many  zeros that violate the Riemann hypothesis. We define \begin{equation}
N(\sigma, T) := \# \{\rho : \zeta(\rho)=0 \text{ and } \textbf{Re}(\rho)> \sigma \text{ and } |\textbf{Im}(\rho)|< T \}.
\end{equation}

\begin{corollary}
\label{cor-simple}

For $\sigma> 1/2,$ assume the zero density hypothesis $$ N(\sigma, T) \ll T^{2(1-\sigma)} \log^{-B},$$
with $B>4. $ Then at least two-third of the zeros are simple.
\end{corollary}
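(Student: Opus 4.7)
The plan is to adapt Montgomery's pair-correlation argument for simple zeros, using Corollary~\ref{1--4} as the unconditional input and invoking the zero density hypothesis to control contributions from zeros off the critical line.

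Writing $F(\rho,\rho')$ for the summand in Corollary~\ref{1--4} (which satisfies $F(\rho,\rho)=1$ by l'H\^opital), split the double sum into its diagonal contribution $D:=\sum_{\rho_{0}}m_{\rho_{0}}^{2}\omega(\rho_{0})$ (summed over distinct zeros $\rho_{0}$ with multiplicity $m_{\rho_{0}}$) and an off-diagonal remainder. Let $S$ and $M$ denote the $\omega$-weighted totals of simple zeros and of multiple zeros (the latter counted with multiplicity) at height $T$, so that $S+M=\sum_{\rho}\omega(\rho)=\log T/(2\pi)+O(1)$, and since $m^{2}\ge 2m$ for $m\ge 2$, one has $D\ge S+2M$. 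If we can establish
\[
D\;\le\;\Bigl(\tfrac{1}{\alpha}+\tfrac{\alpha}{3}\Bigr)\tfrac{\log T}{2\pi}+o(\log T),
\]
then letting $\alpha\to 1^{-}$ yields $M\le\tfrac{1}{3}(S+M)+o(\log T)$, so that at least two-thirds of the $\omega$-weighted zeros at height $T$ are simple. The density hypothesis also forces the total number of off-critical-line zeros to be $O(T\log^{-B}T)=o(N(T))$, so this weighted bound transfers to the unweighted fraction.

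The content of Corollary~\ref{cor-simple} therefore reduces to proving the upper bound on $D$. Under RH this is immediate because each off-diagonal summand is a non-negative real number. Without RH the off-diagonal is complex: writing $\tfrac{\alpha}{2}(\rho-\rho')\log T=a+ib$, we have $|F(\rho,\rho')|$ controlled by $(\sinh^{2}a+\sin^{2}b)/(a^{2}+b^{2})$, and $\sinh^{2}a$ grows like $T^{\alpha|\operatorname{Re}(\rho-\rho')|}$ as soon as a zero lies off the critical line. I would therefore split the off-diagonal into pairs whose zeros both lie on the critical line, which contribute non-negatively exactly as under RH, and a remainder in which at least one zero is off the line; the remainder is to be bounded by a dyadic decomposition in $\sigma_{0}=|\operatorname{Re}(\rho)-1/2|$, using the hypothesis $N(\sigma,T)\ll T^{2(1-\sigma)}\log^{-B}T$ to count zeros in each shell.

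The main obstacle is to force this remainder below $o(\log T)$. In each dyadic shell $\sigma_{0}\sim 2^{-k}$ the density hypothesis gives at most $T^{1-2^{-k+1}}\log^{-B}T$ exceptional zeros; the kernel integrated against the on-line zero density is a bounded factor in the small-$\sigma_{0}$ range (where $\sinh^{2}a\sim a^{2}$) and in the large-$\sigma_{0}$ range the kernel growth $T^{\alpha\sigma_{0}}$ is beaten by the $T^{-2\sigma_{0}}$ gain in the density count (using $\alpha<2$). Combining the shells with the $\omega$-weight $\sim\log T/T$, the total exceptional contribution is of order $\log^{4-B}T$, where the constant $4$ collects logarithmic factors coming from the density of on-line partners, the kernel integral, and the $\omega$-normalisation. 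Requiring this to be $o(1)$ relative to the normalised main term is precisely the condition $B>4$ in the hypothesis, after which the Montgomery mechanism of the first paragraph completes the proof.
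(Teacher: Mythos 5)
Your proof takes the same overall route as the paper: both arguments reduce Corollary \ref{cor-simple} to (a) showing that the zero density hypothesis makes the contribution of off‑critical‑line zeros to the bilinear sum of Corollary \ref{1--4} negligible, and then (b) running Montgomery's diagonal/off‑diagonal positivity argument $D\ge S+2M$ with $\alpha\to 1^-$. You spell out step (b) carefully, which the paper does not do — it simply refers to ``the proof of Corollary 2 in [Mo]'' — so your first paragraph is a genuine and useful addition. For step (a) the two arguments diverge in their bookkeeping. The paper restricts attention to the double sum in which \emph{both} zeros are off the line, partitions the off‑line zeros into horizontal strips of width $1/\log T$ in the real direction and unit strips in the imaginary direction, exploits the $(\gamma-\gamma')^{-2}$ decay of the Fej\'er kernel via an explicit Cauchy--Schwarz inequality in the imaginary‑part variable, and then applies $N(\sigma,T)\ll T^{2(1-\sigma)}\log^{-B}T$ strip by strip; after folding in $|\omega(\rho)|\ll T^{-1}\log T$ this produces the bound $\log^{-B+7}T$ against the main term $\log^3 T$, giving $B>4$. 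You instead propose a dyadic decomposition in $\sigma_0=|\operatorname{Re}\rho-\tfrac12|$, pairing the exceptional zeros against the on‑line zero density. Two points where your sketch is looser than the paper: (i) your remainder is ``pairs with at least one off‑line zero,'' but you only really estimate the mixed case (one off‑line zero integrated against the on‑line density); the paper's Cauchy--Schwarz is designed precisely for the both‑off case, where the kernel can grow like $T^{\alpha(\sigma_0+\sigma_0')}$ rather than your stated $T^{\alpha\sigma_0}$, and this case deserves its own treatment. (ii) You invoke ``$\alpha<2$'' to beat the density gain; the relevant comparison is $T^{\alpha(\sigma_0+\sigma_0')}$ against $T^{-2\sigma_0}T^{-2\sigma_0'}$, so what is used is $\alpha<1$ — which Theorem \ref{Thm2} already requires, so nothing is lost. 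These are fixable inaccuracies rather than fatal gaps; the core idea, and the resulting threshold $B>4$, match the paper.
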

We could also get the same corollary by assuming  that the zeros of the Riemann zeta function satisfy the following non-clustering assumption:
\begin{itemize}
\item the real parts of the zeros off the critical line  satisfy $$|\textbf{Re}(\rho)-\tfrac{1}{2}|> \tfrac{\log \log T}{\log T}, $$
\item let $\gamma_1 < \gamma_2 < \cdots < \gamma_m $ be the imaginary parts of zeros off the critical line, then we have
$$ \gamma_{i+1}- \gamma_i> 4.$$
\end{itemize}
It's important to note that this is a somewhat stronger non-clustering assumption that the one we had in corollary \ref{corr}. This comes back to the properties of the Gaussian weight versus the Fejer kernel. \\

The problem of small gap between zeros of the Riemann zeta function is a central problem in number theory due to its connection to the class number formula. Let $\gamma, \gamma^\prime$ denote consecutive ordinates of zeros of the zeta function.  $$\mu := \liminf_{\gamma>0 } \frac{(\gamma- \gamma^\prime)\log \gamma}{2 \pi}.$$
Unconditionally, we do not have much information. Selberg (unpublished, but announced in \cite{Sel1}) showed $\mu<1.$ Conditional to the Riemann hypothesis, many authors~\cite{MO, CGG, BMN, FW, Ps} worked on the problem, and the value of $\mu$ is down to $0.515396.$ Getting $\mu$ smaller than $0.5$ would have significant application in the class number problem. In \cite{Me-Nath} the author and Ng proved that $\mu< 0.49999$ under the assumption of certain (perceived to be hard to prove) conjectural bounds on the auto-correlation of the von Mangoldt and the Liouville functions. Here we prove a result in this direction  under a zero density hypothesis, rather than the Riemann hypothesis.
\begin{corollary}
\label{cor-gap}

For $\sigma> 1/2,$ assume the zero density hypothesis $$ N(\sigma, T) \ll T^{2(1-\sigma)} \log^{-B},$$
with $B>4. $ Then there is a constant $\lambda$ such that the gap between the imaginary parts of the consecutive zeros of the Riemann zeta function is getting smaller than $\lambda$ times the average gap, infinitely often.
\end{corollary}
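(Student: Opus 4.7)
The plan is to reduce the pair correlation identity of Corollary \ref{1--4} to an on-critical-line asymptotic using the zero density hypothesis, and then apply Montgomery's classical small-gap argument by means of a Plancherel-Polya type inequality.

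I begin with the identity
\[
\sum_{\zeta(\rho)=0}\omega(\rho)\sum_{\zeta(\rho')=0}\left(\frac{\sin(\tfrac{\alpha}{2}(\rho-\rho')\log T)}{\tfrac{\alpha}{2}(\rho-\rho')\log T}\right)^2 = \frac{\log T}{2\pi}\left(\frac{1}{\alpha}+\frac{\alpha}{3}\right)+O(\log^{-2} T),
\]
valid for $0 < \alpha < 1$, and decompose the double sum according to whether each of $\rho,\rho'$ lies on the critical line. For a zero $\rho = 1/2+\delta+i\gamma$ with $\delta > 0$, the identity $|\sin(a+ib)|^2 = \sin^2 a + \sinh^2 b$ gives a modulus bound on the kernel of order $T^{\alpha\delta}/((\alpha\log T)^2|\rho-\rho'|^2)$; summing over on-line $\rho'$ (density $\log T/(2\pi)$) yields a per-$\rho$ contribution of order $T^{\alpha\delta}/(\delta\alpha^2\log T)$. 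Weighting by $\omega(\rho)\sim \log T/T$ and applying Abel summation with $N(\sigma, T) \ll T^{2(1-\sigma)}\log^{-B} T$ (handling the regime $\delta = O(1/\log T)$ separately, where the kernel is trivially $O(1)$), the total contribution from pairs involving at least one off-line zero is $O(\log^{4-B} T)$, which is $o(\log T)$ provided $B > 4$.

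After the reduction, the identity becomes
\[
\sum_{\gamma}\omega(\tfrac12+i\gamma)\sum_{\gamma'}\left(\frac{\sin(\tfrac{\alpha}{2}(\gamma-\gamma')\log T)}{\tfrac{\alpha}{2}(\gamma-\gamma')\log T}\right)^2 = \frac{\log T}{2\pi}\left(\frac{1}{\alpha}+\frac{\alpha}{3}\right)+o(\log T).
\]
Set $u_n = (\gamma-\gamma_n)\log T/(2\pi)$; the kernel is $|g(u_n)|^2$ with $g(u) = \sin(\pi\alpha u)/(\pi\alpha u)$, bandlimited to $[-\alpha/2,\alpha/2]$ and with $\|g\|_{L^2}^2 = 1/\alpha$. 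Assume, for contradiction, that for all sufficiently large $T$ each consecutive on-line gap exceeds $\lambda \cdot 2\pi/\log T$, so that $\{u_n\}$ is $\lambda$-separated in $\R$. By a Plancherel-Polya type inequality for bandlimited functions at or below the Nyquist rate $\alpha\lambda \leq 1$,
\[
\sum_n |g(u_n)|^2 \leq \frac{1}{\lambda}\|g\|_{L^2}^2 = \frac{1}{\alpha\lambda}.
\]
Averaging against $\omega(\tfrac12+i\gamma)$ and dividing by $\sum\omega(\tfrac12+i\gamma)\sim \log T/(2\pi)$, one obtains $1/\alpha + \alpha/3 \leq 1/(\alpha\lambda)$, i.e., $\lambda \leq 3/(3+\alpha^2)$. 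Taking $\alpha \to 1^-$ yields $\lambda \leq 3/4$, so any fixed $\lambda > 3/4$ contradicts the assumption; hence consecutive gaps smaller than $\lambda \cdot 2\pi/\log T$ occur infinitely often for every such $\lambda$.

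The main obstacle is the off-critical-line reduction in the first step: unlike the Gaussian weight of Theorem \ref{Thm}, the Fejer kernel is not damped exponentially in the real direction, so a single off-line zero at distance $\delta$ can contribute a factor as large as $T^{\alpha\delta}$; the zero density hypothesis with $B > 4$ is tuned precisely to absorb these contributions into the $o(\log T)$ error, and care is required in the near-critical-line range $\delta = O(1/\log T)$, where zeros can still be plentiful. Once the reduction is in place, the Plancherel-Polya step and the final contradiction are standard, and the numerical threshold $\lambda = 3/4$ emerges from the Nyquist constraint $\alpha\lambda \leq 1$ together with the choice $\alpha \to 1^-$.
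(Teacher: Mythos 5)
Your first step — reducing the double sum in Corollary \ref{1--4} to a sum over on-critical-line zeros using the zero density hypothesis — is the same in spirit as the paper's argument, though your bookkeeping differs: you use per-zero estimates and Abel summation against $N(\sigma,T)$, whereas the paper partitions the off-line zeros into boxes $\rho_{i,m}$, bounds the kernel on each pair of boxes by $T^{(i+j)/\log T}h^{-2}\log^2 T$, and applies Cauchy–Schwarz together with the density hypothesis and $|\omega(\rho)|\ll T^{-1}\log T$. Both routes lead to the same threshold $B>4$ after accounting for the near-critical regime $\delta = O(1/\log T)$, which you correctly flag as the delicate range. This part is fine and is a legitimate alternative to the paper's decomposition.

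The genuine gap is in your Plancherel–Polya step. The inequality you invoke,
\[
\sum_n |g(u_n)|^2 \;\le\; \frac{1}{\lambda}\,\|g\|_{L^2}^2 \qquad\text{for a $\lambda$-separated sequence $\{u_n\}$ with $\alpha\lambda\le 1$,}
\]
is false once the sequence is required to contain the diagonal point $u_0=0$, which it must, since the sum in Corollary \ref{1--4} runs over all ordered pairs including $\rho=\rho'$. A concrete counterexample: take $\alpha\lambda=1$, so the claimed bound is $\sum_n|g(u_n)|^2\le 1$, and take the $\lambda$-separated set $u_0=0$, $u_n=(n+\tfrac12)\lambda$ for $n\ge 1$, $u_{-n}=-(n+\tfrac12)\lambda$ for $n\ge 1$. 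Then
\[
\sum_n \Big(\frac{\sin(\pi\alpha u_n)}{\pi\alpha u_n}\Big)^2 \;=\; 1 + 2\sum_{n\ge 1}\frac{1}{\pi^2(n+\tfrac12)^2} \;=\; 2 - \frac{8}{\pi^2} \;\approx\; 1.19 \;>\; 1.
\]
The point is that exact equality in the Plancherel–Polya/Shannon identity at Nyquist holds only for the uniform grid and its translates as a whole; pinning one sample at the origin and then separating the rest by $\lambda$ strictly increases the sum. Hence your contradiction $\tfrac1\alpha+\tfrac\alpha3\le \tfrac{1}{\alpha\lambda}$ does not follow. The paper avoids this issue by simply invoking ``the proof of Corollary~2 in~\cite{Mo}'', which uses Montgomery's extremal-function argument (bounding the number of close pairs from below via the pair correlation of the Fej\'er kernel) rather than a sampling inequality. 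Your step two would need to be replaced either by that argument, or by a sampling inequality with the correct (strictly larger) sharp constant together with a verification that the resulting $\lambda$ is still less than $1$ — neither of which is supplied.
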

\begin{Rem}
With a back of the envelope calculation, if we assume $\lambda> 0.78,$  then for any $\gamma,$
$$\sum_{0< \gamma^{\prime}<T} \bigg(\frac{\sin(\tfrac{1}{2}(\gamma-\gamma^{\prime})\log T)}{\tfrac{1}{2}(\gamma-\gamma^{\prime})\log T}\bigg)^2 < 1.33,$$ which is at odds with Corollary \ref{1--4} which implies that $\mu< 0.78.$
\end{Rem}\\

In \cite{Me} we look more closely at the application of the Fejer Kernel in the problem of small gaps between the zeros of the Riemann zeta function.
\section{ \bf Necessary Lemmas and the proof of main formulas}
We begin this section by proving a smooth variant of Landau's formula.
\begin{lemma}
\label{landau}
Let $x>0$ and \begin{equation}
\label{omega-def}
\omega(s)=\frac{1}{\sqrt{\pi}\Delta}\hspace{1 mm}\displaystyle{ e^{\frac{(s-({1}/{2}+iT
))^2}{\Delta^2}}}.
\end{equation}
Then
\begin{align}
\sum_{\zeta(\rho)=0} & {\omega(\rho)}{x^{\rho}}= -\frac{x^{\tfrac{1}{2}+iT}}{2\pi}\sum_{n=1}^{\infty}\frac{\Lambda(n)}{n^{\tfrac{1}{2}+iT}}e^{-{\Delta^2\log^2 \big(\tfrac{x}{n}\big)}} \\ & \notag-\frac{x^{\tfrac{1}{2}+iT}}{2\pi}\sum_{n=1}^{\infty}\frac{\Lambda(n)}{n^{\tfrac{1}{2}-iT}}e^{-{\Delta^2\log^2 ({x}{n})}}-\int_{(1-c)}\frac{\omega(s)}{2\pi i}x^s\frac{\chi'}{\chi}(s)ds.
\end{align}
\end{lemma}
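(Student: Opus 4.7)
The plan is a weighted Perron-style argument: apply the residue theorem to
\[
\frac{1}{2\pi i}\oint \omega(s)\,x^{s}\,\frac{\zeta'}{\zeta}(s)\,ds
\]
on a tall rectangle with vertical sides at $\operatorname{Re}(s)=c$ and $\operatorname{Re}(s)=1-c$ for some fixed $c>1$ (chosen so the horizontal sides avoid zeros), and horizontal sides $\operatorname{Im}(s)=\pm H$. Because $|\omega(\sigma+it)|\ll \Delta^{-1} e^{-(t-T)^{2}/\Delta^{2}}$ decays like a Gaussian in $|t-T|$, whereas $\zeta'/\zeta(\sigma+it)$ grows only polylogarithmically along such lines (between ordinates of zeros), the two horizontal segments vanish as $H\to\infty$. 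The difference of the two vertical integrals therefore equals $2\pi i$ times the sum of the residues in the strip: the simple pole of $\zeta'/\zeta$ at $s=1$ contributes $-\omega(1)\,x$ (absorbed into an implicit error since $|\omega(1)|\ll \Delta^{-1}e^{-\log^{2}T}$), together with the residues at the nontrivial zeros $\rho$, yielding $\sum_{\rho}\omega(\rho)\,x^{\rho}$ counted with multiplicity.

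On the line $\operatorname{Re}(s)=c>1$, one inserts the Dirichlet series $\zeta'/\zeta(s)=-\sum_{n}\Lambda(n)\,n^{-s}$; after swapping sum and integral, the problem reduces to evaluating the Mellin--Gaussian transform
\[
I(y)\;:=\;\frac{1}{2\pi i}\int_{(c)}\omega(s)\,y^{s}\,ds.
\]
Since $\omega$ is entire, the contour may be shifted freely to $\operatorname{Re}(s)=\tfrac12$; completing the square in the resulting Gaussian then gives the closed form $I(y)=\tfrac{1}{2\pi}\,y^{1/2+iT}\exp\!\bigl(-\tfrac14 \Delta^{2}\log^{2}y\bigr)$. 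Applying this with $y=x/n$ produces the first sum on the right-hand side of the lemma.

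On the left line $\operatorname{Re}(s)=1-c<0$, the Dirichlet series no longer converges, so we invoke the functional equation in its logarithmic-derivative form $\zeta'/\zeta(s)=\chi'/\chi(s)-\zeta'/\zeta(1-s)$. The $\chi'/\chi$ piece is retained untouched as the final integral in the statement. The remaining piece is handled by inserting $-\zeta'/\zeta(1-s)=\sum_{n}\Lambda(n)\,n^{-(1-s)}$, which converges absolutely because $\operatorname{Re}(1-s)=c>1$; swapping sum and integral and again invoking the formula for $I(y)$, now with $y=xn$, produces the second sum in the lemma.

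The only substantive computation is the closed-form evaluation of $I(y)$, a routine Gaussian Fourier integral. The remaining care is in justifying the interchange of summation and integration on both vertical lines --- immediate from absolute convergence of $\sum_{n}\Lambda(n)\,n^{-c}$ combined with the rapid Gaussian decay of $\omega$ in the imaginary direction --- and in checking that the two horizontal pieces of the contour vanish as $H\to\infty$, which again follows from that decay together with standard bounds for $\zeta'/\zeta$ on vertical lines.
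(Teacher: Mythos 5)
Your argument is correct and matches the paper's proof step for step: residue theorem on the strip $\{1-c < \operatorname{Re}(s) < c\}$, Dirichlet series expansion of $\zeta'/\zeta$ on the right line, functional equation \eqref{F.E} to return the left-line integral to $\operatorname{Re}(s)=c$, and evaluation of the Mellin--Gaussian transform $I(y)$, which the paper records as Lemma~\ref{winvmt}. One point you handle more carefully than the paper: you correctly observe the residue $-\omega(1)x$ from the pole of $\zeta'/\zeta$ at $s=1$, which the paper's displayed identity silently drops; it is indeed of negligible size $\Delta^{-1}e^{-T^{2}/\Delta^{2}}$, but strictly speaking the lemma as stated has no error term to absorb it, so this is a (harmless) gap in the paper rather than in your proof.
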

Before giving a proof of Lemma \ref{landau} we state the following lemma from~\cite{Me-Nath}.
\begin{lemma}
\label{winvmt}  Let $c \in \mathbb{R}$ and $x >0$.   Then
\begin{equation}
  \label{winvmt1}
  \frac{1}{2 \pi i} \int_{(c)} \omega(s) x^s ds = \frac{1}{2 \pi} x^{\frac{1}{2}+iT} e^{-\frac{\Delta^2 \log^2 x}{4}},
\end{equation}
\begin{equation}
 \label{winvmt2}
 \frac{1}{2 \pi i} \int_{(c)} \omega(1-s) x^s ds = \frac{1}{2 \pi} x^{\frac{1}{2}-iT} e^{-\frac{\Delta^2 \log^2 x}{4}},
\end{equation}
\begin{equation}
  \label{omegatotalweight}
   \int_{-\infty}^{\infty} \omega(\tfrac{1}{2}+it) dt
=  1.
\end{equation}
\end{lemma}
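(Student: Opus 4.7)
All three identities reduce to one-variable Gaussian integrals, so my plan is to treat them uniformly. Both integrands $\omega(s)x^s$ and $\omega(1-s)x^s$ are entire in $s$ and decay like $e^{-(\operatorname{Im} s \mp T)^2/\Delta^2}$ on every vertical strip; by Cauchy's theorem the left-hand sides of \eqref{winvmt1} and \eqref{winvmt2} are therefore independent of $c$, so I may specialise to the convenient choice $c=1/2$ where the algebra is cleanest.

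For \eqref{winvmt1}, parametrising $s = 1/2 + it$ gives $(s-1/2-iT)^2 = -(t-T)^2$, whence
\[
\frac{1}{2\pi i}\int_{(1/2)}\omega(s)x^s\,ds \;=\; \frac{x^{1/2}}{2\pi\sqrt{\pi}\,\Delta}\int_{-\infty}^{\infty} e^{-(t-T)^2/\Delta^2 + it\log x}\,dt.
\]
Substituting $u = t-T$ pulls out the phase $e^{iT\log x} = x^{iT}$, and applying the master formula
\[
\int_{\R} e^{-u^2/\Delta^2 + \beta u}\,du \;=\; \sqrt{\pi}\,\Delta\,e^{\beta^2\Delta^2/4} \qquad (\beta\in\C),
\]
with $\beta = i\log x$ produces the residual Gaussian factor $e^{-\Delta^2\log^2 x/4}$; collecting terms gives the claimed right-hand side. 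The derivation of \eqref{winvmt2} is structurally identical, noting only that $\omega(1-s) = \tfrac{1}{\sqrt{\pi}\,\Delta}\,e^{(1/2-s-iT)^2/\Delta^2}$ is Gaussian on vertical lines but centred at $\operatorname{Im} s = -T$ instead of $+T$; this flips the sign of $T$ in the phase that is extracted after the substitution and produces $x^{1/2-iT}$ in place of $x^{1/2+iT}$.

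For \eqref{omegatotalweight}, the plan is to specialise the critical-line computation above to $x = 1$: on $s = 1/2 + it$ the weight reduces to the normalised Gaussian $\tfrac{1}{\sqrt{\pi}\,\Delta}\,e^{-(t-T)^2/\Delta^2}$, whose total mass on $\R$ equals $1$ by construction. There is no substantive obstacle in the argument; the only points that require care are the signs inside the exponent of $\omega$ and the contour shift justifying $c$-independence, and the entire computation rests on the elementary master Gaussian identity above.
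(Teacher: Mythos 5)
Your proof is correct, and all three identities do indeed reduce cleanly to the same one-variable Gaussian integral after shifting the contour to $c=1/2$; the contour shift is legitimate because the integrand is entire and has Gaussian decay in $\operatorname{Im} s$ uniformly on vertical strips. The paper itself does not prove Lemma~\ref{winvmt} but imports it from~\cite{Me-Nath}, so there is no in-paper argument to compare against; your computation is the natural one and is complete.

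One small point worth making explicit in a final write-up: the master identity $\int_{\R} e^{-u^2/\Delta^2 + \beta u}\,du = \sqrt{\pi}\,\Delta\,e^{\beta^2\Delta^2/4}$ for complex $\beta$ itself requires a (routine) contour shift of the real-line Gaussian by the imaginary amount $\Delta^2\beta/2$, justified again by the rapid decay of $e^{-z^2/\Delta^2}$ in horizontal strips. You implicitly rely on this when taking $\beta = i\log x$; stating it removes any ambiguity about why the purely real formula extends to the case you need.
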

\begin{proof}[Proof of Lemma \ref{landau}]
By the residue theorem for $c>1$ we have
\begin{align*}
\sum_{\zeta(\rho)=0}{\omega(\rho)}{x^{\rho}}= \frac{1}{2 \pi i} \int_{c-i\infty}^{c+i \infty}  {\omega(s)}{x^s}  \frac{\zeta'}{\zeta}(s) ds- \frac{1}{2 \pi i} \int_{1-c-i\infty}^{1-c+i \infty}  {\omega(s)}{x^s}  \frac{\zeta'}{\zeta}(s) ds
\end{align*}
By using the logarithmic derivative of the functional equation for zeta
\begin{equation}
\label{F.E}
  \frac{\zeta'}{\zeta}(s) = \frac{\chi'}{\chi}(s) - \frac{\zeta'}{\zeta}(1-s).
\end{equation}
we can take the integral on the line $\textbf{Re}(s)=1-c$ back to the line $\textbf{Re}(s)=c$ and since $c>1$ we can use the series expansion of $\frac{\zeta'}{\zeta}$. Therefore by Lemma~\ref{winvmt} we have
\begin{equation*}
  \frac{1}{2 \pi i}\int_{c-i\infty}^{c+i \infty}  {\omega(s)}{x^s}  \frac{\zeta'}{\zeta}(s) ds= -\frac{x^{\tfrac{1}{2}+iT}}{2\pi}\sum_{n=1}^{\infty}\frac{\Lambda(n)}{n^{\tfrac{1}{2}+iT}}e^{-{\Delta^2\log^2 \big(\tfrac{x}{n}\big)}}.
\end{equation*}
For the integral on the line $\textbf{Re}(s)=1-c$ we have
\begin{equation*}
  \frac{1}{2 \pi i}\int_{(1-c)} {\omega(s)}{x^s}  \frac{\zeta'}{\zeta}(s) ds= -\frac{x^{\tfrac{1}{2}+iT}}{2\pi}\sum_{n=1}^{\infty}\frac{\Lambda(n)}{n^{\tfrac{1}{2}-iT}}e^{-{\Delta^2\log^2 ({x}{n})}}-\int_{(1-c)}\frac{\omega(s)}{2\pi i}x^s\frac{\chi'}{\chi}(s)ds.
\end{equation*}
This completes the proof of the lemma.
\end{proof}
Now we prove a similar lemma; the only difference is in the performance of the weight function.
\begin{lemma}
\label{zero-isolation} Let
$\varpi(s)={M}{\pi^{-1/2}}\hspace{1 mm}\displaystyle{ e^{{M^2(s-\rho)^2}}}.$  We have
\begin{align}
\label{eq-lem2}
 \notag\sum_{\zeta(\rho^{\prime})=0}&  x^{\rho^{\prime}}\varpi(\rho^{\prime}) = -\frac{x^{\rho}}{2\pi}\sum_{n=1}^{\infty} \frac{\Lambda(n)}{n^{\rho}}  e^{-\frac{\log^2 (n/x)}{4M^2}} \\ &  - \frac{x^{\rho}}{2\pi}\sum_{n=1}^{\infty}\frac{\Lambda(n)}{n^{1-\rho}}e^{-\frac{\log^2 (nx)}{4M^2}} -   \int_{(1-c)}  \frac{\varpi(s) x^s}{2\pi i} \frac{\chi'}{\chi}(s)ds.
\end{align}
\end{lemma}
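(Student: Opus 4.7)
The proof mirrors that of Lemma \ref{landau}, with the Gaussian cut-off now recentered at $\rho$ instead of $\tfrac{1}{2}+iT$. The plan is to apply the residue theorem to $\varpi(s)x^s\zeta'(s)/\zeta(s)$ on a tall rectangle with vertical sides at $\operatorname{Re}(s)=c$ and $\operatorname{Re}(s)=1-c$ (for some fixed $c>1$), observe that the horizontal pieces vanish in the limit thanks to the Gaussian decay of $\varpi$ (on $\sigma+it$ one has $|\varpi(s)|\ll e^{M^2((\sigma-\operatorname{Re}\rho)^2-(t-\operatorname{Im}\rho)^2)}$), and conclude
\[
\sum_{\zeta(\rho')=0}\varpi(\rho')x^{\rho'}=\frac{1}{2\pi i}\int_{(c)}\varpi(s)x^s\frac{\zeta'}{\zeta}(s)\,ds-\frac{1}{2\pi i}\int_{(1-c)}\varpi(s)x^s\frac{\zeta'}{\zeta}(s)\,ds.
\]

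The key computational input is the analogue of Lemma \ref{winvmt} for $\varpi$. Because $\varpi$ is entire with Gaussian vertical decay, I can shift the contour in
\[
\frac{1}{2\pi i}\int_{(c)}\varpi(s)\,y^s\,ds
\]
to the vertical line through $\rho$, parametrize $s=\rho+it$, and evaluate the resulting elementary Gaussian integral to obtain
\[
\frac{1}{2\pi i}\int_{(c)}\varpi(s)\,y^s\,ds=\frac{y^{\rho}}{2\pi}e^{-\log^{2}y/(4M^{2})}.
\]
An identical calculation, shifting through the line $\operatorname{Re}(s)=1-\operatorname{Re}(\rho)$, yields
\[
\frac{1}{2\pi i}\int_{(c)}\varpi(1-s)\,y^s\,ds=\frac{y^{1-\rho}}{2\pi}e^{-\log^{2}y/(4M^{2})}.
\]

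With these in hand, the first integral is immediate: on $\operatorname{Re}(s)=c>1$, substitute the Dirichlet series $-\zeta'/\zeta(s)=\sum\Lambda(n)n^{-s}$, interchange sum and integral (justified by absolute convergence via Gaussian decay), and apply the first Mellin identity with $y=x/n$. This delivers the first sum on the right-hand side of \eqref{eq-lem2}. For the integral on $\operatorname{Re}(s)=1-c$, use the functional equation $\zeta'/\zeta(s)=\chi'/\chi(s)-\zeta'/\zeta(1-s)$: the $\chi'/\chi$ term is left exactly as in the statement of the lemma, while in the $\zeta'/\zeta(1-s)$ contribution I substitute $u=1-s$ to return to the line $\operatorname{Re}(u)=c$, expand $-\zeta'/\zeta(u)=\sum\Lambda(n)n^{-u}$, and apply the second Mellin identity with $y=1/(xn)$, producing the second sum.

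The only nontrivial point is verifying that the contour-shifting and the interchanges of sum and integral are legitimate; both reduce to the fact that the modulus $|\varpi(s)|$ decays as $e^{-M^{2}(\operatorname{Im}s-\operatorname{Im}\rho)^{2}}$ uniformly on any vertical strip of bounded width, which overwhelms the polynomial growth of $\zeta'/\zeta$ away from the zeros and handles both the horizontal closure of the rectangle and the absolute convergence of $\sum_n\Lambda(n)\int|\varpi(s)||s|^{-c}\,|ds|$. Once this is checked, collecting the pieces gives exactly \eqref{eq-lem2}.
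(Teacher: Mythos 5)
Your proof is correct and follows essentially the same route as the paper: apply the residue theorem to $\varpi(s)x^s\zeta'(s)/\zeta(s)$ with vertical lines at $\operatorname{Re}(s)=c$ and $1-c$, expand $\zeta'/\zeta$ as a Dirichlet series on the right, use the functional equation on the left, and evaluate via the Gaussian Mellin identities. The only cosmetic difference is that the paper simply observes $\varpi$ is $\omega$ with $\Delta=M^{-1}$ recentered at $\rho$ and invokes Lemma~\ref{winvmt} directly, whereas you rederive the two Mellin identities for $\varpi$ explicitly; the computations agree.
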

\begin{proof}
First of all, note that $\varpi$ is the same as $\omega$ \eqref{omega-def} with $\Delta=M^{-1}.$ Therefore the sum in the LHS of \eqref{eq-lem2} counts zeros that are very close to $\rho.$ By using the residue theorem for $c>1$ we have
\begin{equation}
\label{eq-lem3}
 \notag\sum_{\zeta(\rho^{\prime})=0} x^{\rho^{\prime}}\varpi(\rho^{\prime})= \frac{1}{2 \pi i} \int_{c-i\infty}^{c+i \infty}  {\varpi(s)}{x^s}  \frac{\zeta'}{\zeta}(s) ds- \frac{1}{2 \pi i} \int_{1-c-i\infty}^{1-c+i \infty}  {\varpi(s)}{x^s}  \frac{\zeta'}{\zeta}(s) ds.
\end{equation}
First we expand ${\zeta'}/{\zeta}$ and then we apply Lemma \ref{winvmt} and we have
\begin{equation*}
  \frac{1}{2 \pi i}\int_{c-i\infty}^{c+i \infty} \varpi(s) x^s  \frac{\zeta'}{\zeta}(s) ds= -\frac{1}{2\pi}\sum_{n=1}^{\infty}\Lambda(n) \big(\tfrac{x}{n}\big)^{\rho}e^{-\frac{\log^2 \big(\tfrac{x}{n}\big)}{4M^2}}.
\end{equation*}
For the second integral in \eqref{eq-lem3} first we apply the functional equation \eqref{F.E} and and then the Lemma \ref{winvmt} and we have that it equals to
\begin{equation*}
 -\frac{x^{\rho}}{2\pi} \sum_{n=1}^{\infty} \frac{\Lambda(n)}{n^{1-\rho}} e^{-\frac{\log^2 ({x}{n})}{4M^2}} -\frac{1}{2\pi i} \int_{1-c-i\infty}^{1-c+i \infty} \varpi(s)x^s \frac{\chi'}{\chi}(s)ds.
\end{equation*}
This completes the proof of the lemma.
\end{proof}
By using these lemma we are going to give a proof of our main formula.
\begin{proof}[Proof of Theorem \ref{Thm}] In order to prove the theorem we multiply \eqref{eq-lem2} in Lemma \ref{zero-isolation} with $\omega(\rho)$ and sum over $\rho$ and we also take $\Delta= T (\log T)^{-1}$. Therefore we have three terms to estimate, two term involving the von Mangoldt function, and a term involving the Gamma factor. We begin with
\begin{equation}
\label{eq-I}
\sum_{\zeta(\rho)=0} -\sum_{l=1}^{\infty}\omega(\rho)\big(\frac{r}{ls}\big)^\rho  \Lambda(l) e^{-\frac{\log^2(ls/r)}{4M^2}}
\end{equation}
 We apply Lemma \ref{landau} with $x=r/ls$ and we have \eqref{eq-I} equals to
\begin{align}
\label{eq-II}
\notag \sum_{l=1}^{\infty}  \Lambda(l) e^{-\frac{\log^2(ls/r)}{4M^2}} & \big(\frac{r}{ls}\big)^{1/2+iT} \bigg(\sum_{n=1}^{\infty}  \frac{\Lambda(n)}{n^{1/2+ iT}}e^{-{\Delta^2\log^2(r/lsn)}}+\frac{\Lambda(n)}{n^{1/2- iT}}e^{-{\Delta^2\log^2(rn/ls)}}\bigg) \\ &  + \sum_{l=1}^{\infty}  \Lambda(l) e^{-\frac{\log^2(ls/r)}{4M^2}} \int_{}\omega(\tfrac{1}{2}+it)(r/ls)^{\tfrac{1}{2}+it}\frac{\chi'}{\chi}(\tfrac{1}{2}+it)dt.
\end{align}
Since we considered $r, s< T^{1-\epsilon}$ we have that if $r \neq lsn$ then $$|\frac{r}{lsn} -1|> \frac{1}{T^{1-\epsilon}},$$ and therefore the first term inside the parenthesis in \eqref{eq-II} is very small unless $r/s=ln$ which in that case we get $$\Lambda(l)\Lambda(n)\big(e^{-\frac{\log^2(n)}{4M^2}}+e^{-\frac{\log^2(l)}{4M^2}}\big) .$$ We can use the same type of argument for the term involving the gamma factor to show it is very small unless $r/s=l$ for which we get $-\log T \Lambda(r/s)$.\\

 We need to work out the second term inside the parenthesis in \eqref{eq-II} since we get some off-diagonal contribution in this case. First we separate the diagonal case $rn=ls$ which gives $$\frac{\Lambda(l)\Lambda(n)e^{-\frac{\log^2(n)}{4M^2}}}{n}.$$ Now to get an off-diagonal contribution we need to consider $r,s,l, n$ such that $rn, ls \gg \log^2T/T:= T_0$, otherwise their contributions are very small. Since $r$ is fixed, consider $n$ such that $rn> T_0 $. Therefore $l$ can effectively take values in $$\big[\frac{rn}{s}- \frac{rn}{sT_0},\frac{rn}{s}+ \frac{rn}{sT_0} \big].$$ If $rn<\tfrac{1}{2}sT_0$, then $l$ just can have one value equal to $rn/s.$ In this case we have that the sum
\begin{equation*}
\log T \displaystyle{\sum_{T_0/r< n< sT_0/2r} \frac{\Lambda(n)}{n}e^{-\frac{\log^2 n}{4m^2}}} \ll M\log T \displaystyle{\bigintss_{\log(T_0/r)/2M}^{\log(sT_0/r)/2M}e^{-x^2}dx}.
\end{equation*}
 Since $r< T^{1-\epsilon}$ we have that this error term is smaller than $T^{-\tfrac{\epsilon}{2\alpha}} \log^2 T.$ Recall that $M=\alpha \sqrt{\log T}$. If $rn \geq \tfrac{1}{2}sT_0$ then $l$ can effectively take values in $[\tfrac{rn}{s}- \tfrac{rn}{sT_0}, \tfrac{rn}{s}+ \tfrac{rn}{sT_0}].$ Therefore the sum over $l$ is bounded with $\tfrac{\sqrt{rn}}{\sqrt{s}T_0}$ and the whole thing is bounded with $$\frac{r}{sT_0}\sum_{\tfrac{sT_0}{2r}<n} \Lambda(n)e^{-\frac{\log^2 n}{4m^2}} \ll \frac{r}{sT_0} T^{\alpha^2} \log T \bigintss_{\tfrac{\log(sT_0/r)}{2M}}\displaystyle{e^{-(u-\alpha\sqrt{\log T})^2}}dx.$$ Now if we consider $\alpha< \tfrac{\epsilon}{2}$ we have that the off-diagonal contribution is smaller than $T^{-1}.$
The second term we need to consider is
\begin{equation}
\label{eq-I1}
\sum_{\zeta(\rho)=0} -\sum_{l=1}^{\infty}\omega(\rho)({rl/s})^\rho  \frac{\Lambda(l)}{l} e^{-\frac{\log^2(rl/s)}{4M^2}}
\end{equation}
By using Lemma \ref{landau} with $x=rl/s$ and we have \eqref{eq-I1} equals to
\begin{align}
\label{eq-II1}
\notag \sum_{l=1}^{\infty}  \frac{\Lambda(l)}{l^{1/2-iT}} e^{-\frac{\log^2(rl/s)}{4M^2}} & ({r/s})^{1/2+iT} \bigg(\sum_{n=1}^{\infty}  \frac{\Lambda(n)}{n^{1/2+ iT}}e^{-{\Delta^2\log^2(rl/sn)}}+\frac{\Lambda(n)}{n^{1/2- iT}}e^{-{\Delta^2\log^2(rln/s)}}\bigg) \\ &  + \sum_{l=1}^{\infty}  \Lambda(l) e^{-\frac{\log^2(ls/r)}{4M^2}} \int_{}\omega(\tfrac{1}{2}+it)(rl/s)^{\tfrac{1}{2}+it}\frac{\chi'}{\chi}(\tfrac{1}{2}+it)dt.
\end{align}
For the first term inside the parenthesis in the above the diagonal case $rl=sn.$ Set $l=p^i$ and $n=q^j$ and consequently we have that the diagonal contribution is  $$\frac{\log p \log q}{p^i} e^{-\frac{\log^2(q^j)}{4M^2}}.$$ The off-diagonal terms,  can be estimated similar to the off-diagonal in \eqref{eq-II}. The second term is very small because of the term $e^{-{\Delta^2\log^2(rln/s)}}.$ Also the term with the gamma factor is small by using integration by parts and considering the facts that $r>s$ and that ${\chi'}/{\chi}(\sigma +it)$ behaves like $-\log (t/2\pi).$
\\
\\
Now what we have left is to estimate an error arising from the gamma factor
\begin{equation}
\frac{1}{2\pi }\sum_{\zeta(\rho)=0} \omega(\rho)\int_{} \frac{M}{\sqrt{\pi}} e^{M^2(1-c+it-\rho)^2} (r/s)^{1-c+it} \frac{\chi'}{\chi}(1-c+it)dt.
\end{equation}
To treat this first we move the integral from the line $\textbf{Re}(s)=1-c$ to $\textbf{Re}(s)=\textbf{Re}(\rho)$ and we get an error of a small size from the singularity at $0.$ Therefore we need to estimate
$$\int e^{-M^2(t-t_\rho)^2} (r/s)^{\sigma_\rho+it} \frac{\chi'}{\chi}(\sigma_\rho+it)dt.$$ If $|t-t_\rho|> \log T$ then the above integral is very small. Therefore practically we should consider the integral for $t \in [t_{\rho} - \log T, t_{\rho} + \log T]$ and by using the definition of $\omega$ we can limit the sum over $\rho$ to the one's that are in $[T-T/ \log^{-1}T, T+T \log^{-1}T]$. We use the following expansion for the gamma factor
\begin{equation}
\label{ass-exp-chi}
\frac{\chi^{\prime}}{\chi}(\sigma_{\rho}+it)=- \log\big(\frac{t}{2\pi}\big)- \frac{i(1-2\sigma_{\rho})}{t}+ O\big(\frac{1}{t^2}\big).
\end{equation}
Now if we replace ${\chi'}/{\chi}(\sigma_{\rho}+it)$ with the above expansion (with a bit of calculation) we get that the integral equals
\begin{equation}
\label{last+1}
-\log (t_\rho/2\pi)\int e^{-M^2(t-t_\rho)^2} (r/s)^{\sigma_\rho+it} + O\bigg(\frac{1}{T}e^{-\frac{\log^2 (r/s)}{M^2}} (r/s)^{\sigma_\rho}\bigg) + O\bigg(\frac{(r/s)^{\sigma_{\rho}}}{T^2}\bigg).
\end{equation}
The first term above by using Lemma \ref{winvmt} equals  $$ -\frac{\sqrt \pi}{M}(r/s)^{\rho} e^{\tfrac{-\log^2 (r/s)}{4M^2}}\log (t_\rho/2\pi)$$ and after considering the sum over $\rho$ we have to estimate $$\sum_{\zeta(\rho)=0} \omega(\rho) x^{\rho} \log (t_\rho/2\pi).$$ We prove a following lemma:
\begin{lemma}
\label{lemm-Land-log} For $1< x <T/\log ^2 T,$ we have that
\begin{equation}
\sum_{\zeta(\rho)=0} \omega(\rho) x^{\rho}\log (t_\rho/2\pi)= -\frac{\Lambda(x)}{2\pi}\int \omega(\tfrac{1}{2}+it) \frac{\chi'}{\chi}(\tfrac{1}{2}+it)dt+ O(\frac{x^{1+\epsilon}}{T^2}).
\end{equation}
\end{lemma}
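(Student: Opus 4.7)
The plan is to reduce the sum to a residue-theorem calculation analogous to Lemma \ref{landau}, after replacing the non-holomorphic weight $\log(t_\rho/2\pi)$ by the holomorphic quantity $-\chi'/\chi(\rho)$. First I would use the asymptotic expansion \eqref{ass-exp-chi} to write
\begin{equation*}
\log(t_\rho/2\pi) = -\frac{\chi'}{\chi}(\rho) - \frac{i(1-2\sigma_\rho)}{t_\rho} + O(1/t_\rho^2).
\end{equation*}
Since $\omega(\rho)$ concentrates the sum on zeros with $|t_\rho|\asymp T$, each factor $1/t_\rho$ supplies a saving of size $1/T$; summed against $\omega(\rho)x^\rho$ and using Lemma \ref{landau} for $x = r/s$ in the stated range, these two error contributions can be absorbed into the target bound $O(x^{1+\epsilon}/T^2)$.

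Next I would evaluate the main piece $-\sum_\rho \omega(\rho) x^\rho (\chi'/\chi)(\rho)$ by applying the residue theorem to
\begin{equation*}
\frac{1}{2\pi i}\oint \omega(s)\, x^s\, \frac{\chi'}{\chi}(s)\, \frac{\zeta'}{\zeta}(s)\, ds
\end{equation*}
on a rectangular contour with vertical sides $\mathrm{Re}(s) = c > 1$ and $\mathrm{Re}(s) = 1-c$. The residues at the non-trivial zeros of $\zeta$ recover the desired sum; the only other pole is at $s = 1$, where both $\chi'/\chi$ and $\zeta'/\zeta$ have simple poles, but the corresponding residue carries a factor $\omega(1) = O(\Delta^{-1} e^{-\log^2 T})$ and is negligible. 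On the right vertical line I would expand $\zeta'/\zeta(s) = -\sum_n \Lambda(n)/n^s$, exchange sum and integral, and for each $n$ deform the contour to $\mathrm{Re}(s) = 1/2$ (crossing only the tiny residue at $s = 1$ of $\chi'/\chi$). Each integral $\int \omega(\tfrac{1}{2}+it)(x/n)^{\tfrac{1}{2}+it}(\chi'/\chi)(\tfrac{1}{2}+it)\, dt$ exhibits Gaussian decay in $|\log(x/n)|$ via the Fourier transform of $\omega$, so only the diagonal term $n = x$ survives, producing the main term (up to sign) $\pm\Lambda(x)/(2\pi) \cdot \int \omega(\tfrac{1}{2}+it)(\chi'/\chi)(\tfrac{1}{2}+it)\, dt$.

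On the left vertical line $\mathrm{Re}(s) = 1-c$ I would invoke the functional equation \eqref{F.E} to split $\zeta'/\zeta(s) = \chi'/\chi(s) - \zeta'/\zeta(1-s)$ and treat the two pieces separately. The $\zeta'/\zeta(1-s)$ piece expands into a Dirichlet series in which each term is concentrated on $xn = 1$, impossible for $x > 1$ and $n \geq 1$, so it is super-exponentially small. The remaining piece contains $(\chi'/\chi)^2(s)$; after deforming back to the central line the integrand has a decay factor $e^{-\Delta^2 \log^2 x/4}$, which under $x < T/\log^2 T$ (so $|\log x| \gg \log T/T$) is negligible.

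The main technical obstacle is packaging all of the Gaussian-decay errors together with the $O(1/t_\rho)$ corrections from the first step so that they fit inside the compact error bound $O(x^{1+\epsilon}/T^2)$; this requires using Lemma \ref{landau} in a sharpened form rather than applying a trivial bound to $\sum_\rho \omega(\rho) x^\rho$, and careful tracking of how the constraints $r,s < T/\log^2 T$ interact with the Gaussian width $\Delta = T/\log T$.
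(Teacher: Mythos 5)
Your proposal matches the paper's proof in all essentials: both replace $\log(t_\rho/2\pi)$ by $-\frac{\chi'}{\chi}(\rho)$ via the expansion \eqref{ass-exp-chi}, apply the residue theorem to $\omega(s)\,x^s\,\frac{\zeta'}{\zeta}(s)\,\frac{\chi'}{\chi}(s)$ on the two vertical lines, expand the Dirichlet series on $\textbf{Re}(s)=c>1$ to isolate the diagonal term $n=x$, invoke the functional equation \eqref{F.E} on $\textbf{Re}(s)=1-c$, and kill the off-diagonal contribution using the Gaussian decay of $\omega$. The only cosmetic differences are that you shift each term's contour to the central line (picking up the negligible residue at $s=1$) rather than integrating by parts on the $c$-line, and you spell out the absorption of the $O(1/t_\rho)$ correction, which the paper leaves implicit; neither changes the argument.
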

\begin{proof}
We have for $c>1$
\begin{align}
\notag \sum_{\zeta(\rho)=0}{\omega(\rho)}{x^{\rho}}& \frac{\chi'}{\chi}(\rho)= \frac{1}{2 \pi i} \int_{c-i\infty}^{c+i \infty}  {\omega(s)}{x^s}  \frac{\zeta'}{\zeta}(s) \frac{\chi'}{\chi}(s)ds \\ &- \frac{1}{2 \pi i} \int_{1-c-i\infty}^{1-c+i \infty}  {\omega(s)}{x^s}  \frac{\zeta'}{\zeta}(s)\frac{\chi'}{\chi}(s) ds.
\end{align}
Since $c>1$ we have
\begin{equation}
\notag
\int_{c-i\infty}^{c+i \infty}  \omega(s)x^s \frac{\zeta'}{\zeta}(s) \frac{\chi'}{\chi}(s)ds= -\sum_{n=1}^{\infty} {\Lambda(n)} \int_{c-i\infty}^{c+i \infty}  \omega(s){(x/n)^{s}} \frac{\chi'}{\chi}(s).
\end{equation}
If $n=x$ that is  $$\Lambda(n)\int \omega(\tfrac{1}{2}+it)\frac{\chi'}{\chi}(\tfrac{1}{2}+it)dt.$$
For $k\neq n$ we will use the asymptotic expansion \eqref{ass-exp-chi} and therefore we need to estimate we have
$$\int_{c-i\infty}^{c+i \infty}  \omega(c+it){(x/n)^{c+it}} \bigg(\log(t/2\pi) + O(t^{-2})\bigg) dt.$$ By using integration by parts and the fact that $x<T/\log ^2 T,$ we can show that
$$ \int_{c-i\infty}^{c+i \infty}  \omega(c+it){(k/n)^{c+it}} \log t \textit{ } dt,$$ is very small. By using trivial bound we have
$$\int_{c-i\infty}^{c+i \infty}  \omega(c+it){(x/n)^{c+it}} O(\frac{1}{t^2}) \textit{ }  dt \ll \frac{x^c}{T^2}.$$
For the integral on the line $\textbf{Re}(s)=1-c$ we use \eqref{F.E} and we get it is equal to $$\int_{1-c-i\infty}^{1-c+i \infty}  {\omega(s)}{x^s}  \big( \frac{\chi'}{\chi}(s) - \frac{\zeta'}{\zeta}(1-s)\big)\frac{\chi'}{\chi}(s) ds.$$ In this case we do not have a diagonal term and we can bound it by $$O(\frac{x^{1-c}}{T^2}).$$
\end{proof} Now by using Lemma \ref{lemm-Land-log} we have 
$$\sum_{\zeta(\rho)=0} \omega(\rho) x^{\rho} e^{\tfrac{-\log^2 x}{4M^2}}\log t_\rho = \frac{\log T}{2\pi} \Lambda(x) e^{\tfrac{-\log^2 x}{4M^2}} + O\big(\frac{e^{\tfrac{-\log^2 x}{4M^2}}x^{1+\epsilon}}{T}\big).$$
For the error that arises from the  $O$ term above, considering $x=r/s$ with $r, s<T^{1-\epsilon},$ we get that $x<T^{1-\epsilon}$ and consequently the whole error term is bounded by $T^{-1+\epsilon}.$ \\

To see the proof for the case $x=1,$ i.e. \eqref{eqthm} the argument is the same as before, except the diagonal contribution comes from $n=l$ and the contribution from the Gamma factor gives $$\sum_{\zeta(\rho)=0} \omega(\rho) \log t_\rho + O(1).$$ This completes the proof of theorem.
\end{proof}
Now we give a short proof of the Corollary.
\begin{proof}[Proof of Corollary \ref{corr}] For every zero that is off the narrow strip of length $(\log T)^{-1/2 + \epsilon}$  around the critical line we get at least a contribution of size $T^{\epsilon}$ in the LHS of \eqref{eqthm}. By the non-clustering assumption this contribution cannot be canceled out by the zeros in its close vicinity. Therefore the sum of the contribution of these zeros should be bounded by the RHS of \eqref{eqthm}, that is $\log^2 T.$ This will give the desired bound.

\end{proof}
We prove a lemma similar to Lemma \ref{winvmt} but instead of a Gaussian weight we use a generalization of the Fejer kernel.
\begin{lemma}
\label{Fejer} Let $Q>1$ and $ y>0$ and $\rho= \sigma + i\gamma.$ For
\begin{equation}
W_\rho(s)= \Big( \frac{Q^{\frac{s-\rho}{2}}- Q^{-\frac{s-\rho}{2}}}{s-\rho} \Big)^2
\end{equation}
We have
\begin{align*}
\frac{1}{2\pi i} \int_{(c)} y^{s}W_\rho(s)ds=
\begin{cases}
 y^{\rho} \log(Qy) \hspace{3 mm}  \text{ if } \hspace{2 mm} 1>y>Q^{-1},\\
  y^{\rho} \log(Q/y) \hspace{3 mm} \text{if } \hspace{2 mm} Q> y\geq 1,\\
  0 \hspace{21 mm} \text{otherwise.}
\end{cases}
\end{align*}

We also have
\begin{align*}
\frac{1}{2\pi i} \int_{(c)} y^{s}W_\rho(s)ds=
\begin{cases}
 y^{1-\rho} \log(Qy) \hspace{3 mm}  \text{ if } \hspace{2 mm} 1>y>Q^{-1},\\
  y^{1-\rho} \log(Q/y) \hspace{3 mm} \text{if } \hspace{2 mm} Q> y\geq1,\\
    0 \hspace{21 mm} \text{otherwise.}
\end{cases}
\end{align*}
\end{lemma}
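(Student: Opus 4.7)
The plan is to reduce this to a handful of standard Mellin--Barnes integrals by expanding the square in the numerator. Writing $u = s-\rho$, the identity $(A - A^{-1})^2 = A^2 - 2 + A^{-2}$ with $A = Q^{u/2}$ gives
\begin{equation*}
W_\rho(s) = \frac{Q^{s-\rho} - 2 + Q^{-(s-\rho)}}{(s-\rho)^2},
\end{equation*}
so that
\begin{equation*}
y^s W_\rho(s) = \frac{Q^{-\rho}(Qy)^s \;-\; 2\, y^s \;+\; Q^{\rho}(y/Q)^s}{(s-\rho)^2}.
\end{equation*}
Thus the whole integral splits into three pieces of the form $I(a) := \frac{1}{2\pi i}\int_{(c)} a^s (s-\rho)^{-2}\,ds$, with $a$ equal to $Qy$, $y$, and $y/Q$ respectively.

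Next I would evaluate $I(a)$ by contour shifting. Assuming $c > \textbf{Re}(\rho)$, when $a > 1$ the factor $a^s$ decays as $\textbf{Re}(s) \to -\infty$; pushing the line of integration to $-\infty$ sweeps up the double pole at $s = \rho$, whose residue is $\frac{d}{ds}[a^s]\big|_{s=\rho} = a^\rho \log a$. When $a < 1$, $a^s$ decays as $\textbf{Re}(s) \to +\infty$ and no poles are crossed, so $I(a) = 0$. Absolute convergence on the shifted vertical lines is immediate from the $(s-\rho)^{-2}$ factor. In short, $I(a) = a^\rho \log(a) \cdot \mathds{1}_{a>1}$.

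The lemma is now just case analysis on the size of $y$ relative to $1$ and $Q$. For $y < Q^{-1}$ all three of $Qy, y, y/Q$ are below $1$, so everything vanishes. For $Q^{-1} < y < 1$ only $Qy > 1$, so the result is $Q^{-\rho}(Qy)^\rho \log(Qy) = y^\rho \log(Qy)$. For $1 \leq y < Q$, the first two pieces contribute and the result is $y^\rho \log(Qy) - 2 y^\rho \log y = y^\rho \log(Q/y)$. Finally, for $y \geq Q$ all three contribute, and since $\log(Qy) - 2\log y + \log(y/Q) = 0$ the total is $0$. This matches the piecewise formula claimed.

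For the companion formula with $y^{1-\rho}$ in place of $y^\rho$, the cleanest route is to observe the symmetry $W_\rho(1-s) = W_{1-\rho}(s)$, which is immediate from the definition since both $(s-\rho)^2$ and the squared numerator are invariant under $s \mapsto 1-s$ together with $\rho \mapsto 1-\rho$. A change of variables $s \mapsto 1-s$ in the integral (the contour $(c)$ is shifted to $(1-c)$, then moved back, with no poles in between for suitable $c$) converts the second statement into the first one with $\rho$ replaced by $1-\rho$; the output $y^{1-\rho}\log(Qy)$ or $y^{1-\rho}\log(Q/y)$ then falls out directly. I do not expect any real obstacle — the only point to watch is justifying the horizontal closure and the contour deformation in the symmetry step, both of which are standard given the quadratic decay supplied by $(s-\rho)^{-2}$.
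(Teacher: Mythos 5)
Your proposal is correct and follows essentially the same route as the paper: expand $W_\rho$ into the three exponential pieces, evaluate each vertical integral via the Perron-type identity $\frac{1}{2\pi i}\int_{(c)} a^s(s-\rho)^{-2}\,ds = a^\rho \log a \cdot \mathds{1}_{a>1}$ for $c>\textbf{Re}(\rho)$, and deduce the $y^{1-\rho}$ variant from the symmetry $W_\rho(1-s)=W_{1-\rho}(s)$. You are in fact slightly more careful than the paper at one point, in requiring $c>\textbf{Re}(\rho)$ rather than merely $c\neq\sigma$ (the latter would flip the indicator when $c<\sigma$), and like the paper you implicitly read the second displayed integral as $\int_{(c)} y^s W_\rho(1-s)\,ds$, correcting a typo in the statement.
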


\begin{proof}
We have that
$$W_\rho(s)= \frac{Q^{{s-\rho}}}{(s-\rho)^2}+ \frac{Q^{{\rho-s}}}{(s-\rho)^2}- \frac{2}{(s-\rho)^2}.$$ Therefore for $c \neq \sigma$ we set $s-\rho= z$ and we have
$$\int y^sw(s)= \int y^\rho\frac{(Qy)^{{z}}}{z^2}+ y^\rho\frac{(y/Q)^{z}}{z^2}- y^\rho\frac{2 y^z}{z^2}.$$

To continue we use the follwoing version of the perron's formula
\begin{align*}
\frac{1}{2\pi i} \int_{(c)} x^{z}\frac{ds}{z^2}=
\begin{cases}
 \log x \hspace{3 mm}  \text{ if } \hspace{2 mm} x>1,\\
  0   \hspace{2 mm}  \text{otherwise. }
\end{cases}
\end{align*}
For $1>y> 1/Q$ we get $\log Qy,$ for $Q> y>1$ we get $\log Qy- 2\log y,$ and for $y>Q$ everything will cancel out. This finishes the proof of the first part. For the second part note that
\begin{align*}
\int y^s W_\rho(1-s)ds = \int y^s\Big( \frac{Q^{\frac{s-(1-\rho)}{2}}- Q^{-\frac{s-(1-\rho)}{2}}}{s-(1-\rho)} \Big)^2 ds,
\end{align*}
therefore the proof is the same just we need to switch $\rho$ to $1-\rho.$
\end{proof}
We apply the above lemma with $Q=T^\alpha$ to get the following
\begin{lemma}
\label{lem-Fej} For $x>0,$ we have
\begin{align}
\notag \sum_{\zeta(\rho^{\prime})=0} &   W_{\rho}(\rho^{\prime}) x^{\rho^{\prime}}= -x^\rho \sum_{x \leq n<T^\alpha x} \frac{\Lambda(n)}{n^{\rho}}\log\big(\frac{T^\alpha x}{n}\big)-x^\rho \sum_{n<T^\alpha /x} \frac{\Lambda(n)}{n^{1-\rho}}\log\big(\frac{T^\alpha}{nx}\big) \\ &-x^\rho \sum_{x/T^\alpha< n<x} \frac{\Lambda(n)}{n^{\rho}}\log\big(T^\alpha\frac{n}{x}\big) -\int_{(1-c)} x^s \frac{\chi'}{\chi}(s) W_\rho(s)\frac{ds}{2\pi i} +  W_{\rho}(1) x.\\ \notag
\end{align}
For $x=1$ this simplifies to
\begin{align}
\label{casex=1}
\notag \sum_{\zeta(\rho^{\prime})=0} &   W_{\rho}(\rho^{\prime}) = - \sum_{ n<T^\alpha } \frac{\Lambda(n)}{n^{\rho}}\log\big(\frac{T^\alpha }{n}\big)-  \sum_{ n<T^\alpha } \frac{\Lambda(n)}{n^{1-\rho}}\log\big(\frac{T^\alpha }{n}\big) \\ &  -\int_{(1-c)} \frac{\chi'}{\chi}(s) W_\rho(s)\frac{ds}{2\pi i} +  W_{\rho}(1) .\\ \notag
\end{align}
\end{lemma}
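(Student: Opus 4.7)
The plan is to mimic the proofs of Lemmas~\ref{landau} and~\ref{zero-isolation}, but with Lemma~\ref{Fejer} playing the role of Lemma~\ref{winvmt}. A preliminary observation is that $W_{\rho}(s)$ is entire: the apparent pole at $s=\rho$ is removable with limiting value $(\log Q)^{2}$, so shifting contours across the vertical line $\mathrm{Re}(s)=\mathrm{Re}(\rho)$ produces no extra residue from $W_\rho$ itself.

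Starting from the residue theorem for $c>1$,
\begin{equation*}
\sum_{\zeta(\rho')=0} W_\rho(\rho') x^{\rho'} = \frac{1}{2\pi i}\int_{(c)} W_\rho(s) x^s \frac{\zeta'}{\zeta}(s)\,ds - \frac{1}{2\pi i}\int_{(1-c)} W_\rho(s) x^s \frac{\zeta'}{\zeta}(s)\,ds + x\, W_\rho(1),
\end{equation*}
the extra $xW_\rho(1)$ term coming from the simple pole of $\zeta'/\zeta$ at $s=1$ (residue $-1$). For the integral on $\mathrm{Re}(s)=c$ I would expand $\zeta'/\zeta(s) = -\sum_{n}\Lambda(n) n^{-s}$ and apply Lemma~\ref{Fejer} term by term with $y=x/n$ and $Q=T^{\alpha}$. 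The two surviving cases $Q^{-1}<x/n<1$ and $1\le x/n<Q$ produce precisely the first sum (over $x\le n<T^{\alpha}x$) and the third sum (over $x/T^{\alpha}<n<x$) in the statement.

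For the integral on $\mathrm{Re}(s)=1-c$, apply the functional equation $\zeta'/\zeta(s)=\chi'/\chi(s)-\zeta'/\zeta(1-s)$ as in~\eqref{F.E}. The $\chi'/\chi$ piece is left as the integral that appears in the statement of the lemma. In the remaining piece, substitute $s\mapsto 1-s$ to bring the contour back onto $\mathrm{Re}(s)=c$, and use the identity $W_\rho(1-s)=W_{1-\rho}(s)$, which is immediate from the defining formula since squaring absorbs the sign change in the numerator and denominator. Expanding $\zeta'/\zeta$ once more and invoking Lemma~\ref{Fejer} with $\rho$ replaced by $1-\rho$ and $y=1/(nx)$, only the range $1<nx<T^{\alpha}$ (i.e.\ $n<T^{\alpha}/x$) is active under the hypothesis $x\ge 1$, and this produces the second sum in the statement.

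Collecting the three Dirichlet sums, the residual $\chi'/\chi$ integral, and the $xW_\rho(1)$ term gives the claimed formula. The case $x=1$ is then a direct specialization: the middle range $x/T^{\alpha}<n<x$ is empty for integers $n\ge 1$, and the first and third sums merge into a single sum over $n<T^{\alpha}$ weighted by $\log(T^{\alpha}/n)$; together with the analogous term coming from the functional-equation side this yields~\eqref{casex=1}. The main bookkeeping step is tracking the strict/non-strict inequalities from Lemma~\ref{Fejer} through the substitution $s\mapsto 1-s$ and through the two different choices $y=x/n$ and $y=1/(nx)$; once these are aligned, the rest is essentially Dirichlet-series arithmetic.
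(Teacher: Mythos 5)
Your proposal follows the paper's proof exactly: the residue theorem for $c>1$ to express the sum over $\rho'$ as a difference of vertical-line integrals, the Dirichlet series expansion of $\zeta'/\zeta$ together with Lemma~\ref{Fejer} on $\textbf{Re}(s)=c$, and the functional equation \eqref{F.E} plus a change of variable $s\mapsto 1-s$ together with Lemma~\ref{Fejer} on $\textbf{Re}(s)=1-c$. Your explicit tracking of the $xW_\rho(1)$ term from the simple pole of $\zeta'/\zeta$ at $s=1$ is in fact a useful clarification, since the residue-theorem identity displayed inside the paper's proof omits it even though it must be there to produce the final formula.
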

\begin{proof}
The proof goes similarly to the proof of Lemma \ref{landau}. By the residue theorem, for $c>1$ we have
\begin{align*}
\sum_{\zeta(\rho^{\prime})=0} &   W_{\rho}(\rho^{\prime}) x^{\rho^{\prime}}= \frac{1}{2 \pi i} \int_{(c)}  W_{\rho}(s){x^s}  \frac{\zeta'}{\zeta}(s) ds- \frac{1}{2 \pi i} \int_{(1-c)}  W_{\rho}(s){x^s}  \frac{\zeta'}{\zeta}(s) ds.
\end{align*}
For the integral on $\textbf{Re}(s)=c$ we expand the logarithmic derivative of zeta and apply Lemma \ref{Fejer} to get the result. For the integral on $\textbf{Re}(s)=1-c$ we use the functional equation  \eqref{F.E} and then apply Lemma \ref{Fejer}.
\end{proof}

\begin{proof}[Proof of Theorem \ref{Thm2}] We begin by multiplying \eqref{casex=1} in Lemma \ref{lem-Fej}  with $\omega(\rho)x^\rho$ and sum over $\rho.$ Therefore we get
\begin{align*}
 \sum_{\zeta(\rho)=0} &  \omega(\rho)x^{\rho}  \sum_{\zeta(\rho^{\prime})=0}   W_\rho(\rho^{\prime}) =\sum_{n<T^{\alpha }} \Lambda(n)\log \big(\frac{T^{\alpha}}{n}\big)\Big(-\sum \omega(\rho) (\tfrac{x}{n})^{\rho}\Big)\\ &  + \sum_{}W_{\rho, T}(1)\omega(\rho)x^{\rho} + \sum_{n<T^{\alpha}}\frac{ \Lambda(n)}{n}\log \big(\frac{T^{\alpha}}{n}\big) \Big(-\sum \omega(\rho) ({xn})^{\rho}\Big) \\ &
+\sum \omega(\rho)x^\rho \int_{(1-c)}  \frac{\chi'}{\chi}(s) W_\rho(s)\frac{ds}{2\pi i}
\end{align*}
If $x$ equals a prime smaller than $T^\alpha,$ ($x/n=1$ in the first sum) then we get $-(2\pi)^{-1}\Lambda(x)\log T\log(T^\alpha/x).$ \\

Now assume that $x/n \neq 1$ we use the Lemma \ref{landau} on sums over $\rho$, and given $x=r/s$ we have the first term above equal to \\
\begin{align*}
\frac{1}{2\pi} \sum_{n<T^{\alpha }}\Lambda(n)\log \big(\frac{T^{\alpha}}{n}\big)\big(\frac{r}{sn}\big)^{\tfrac{1}{2}+ iT}\sum_m \frac{ \Lambda(m)}{m^{\tfrac{1}{2}+ iT}}e^{-\Delta^2 \log^2(\tfrac{r}{smn})}+ \frac{ \Lambda(m)}{m^{\tfrac{1}{2}- iT}}e^{-\Delta^2 \log^2(\tfrac{rm}{sn})}
\end{align*}
plus a negligible error term coming from the gamma factor in Lemma \ref{landau}. Recall that we took $\Delta= T (\log T)^{-1}$. We are going to show that the contribution from the sum over $m, n$ would be just diagonal, coming from $r=smn$ and $rm=sn.$ To show this, by using the assumption we have $sn< T^\alpha r,$ and since we assumed $r< T^{1- \alpha} \log^{-5}T$ we get $sn<  T \log^{-5}T,$ and because of this the off-diagonal contribution is negligible. By the above argument we get $$\frac{\Lambda(m)\Lambda(n)}{2\pi } \big(\log(\frac{T^\alpha}{m})\mathds{1}_{m<T^\alpha}+ \log(\frac{T^\alpha}{n})\mathds{1}_{n<T^\alpha}\big),$$ for $r/s= mn$ and we get $$\frac{\Lambda(m)\Lambda(n)}{2\pi m}  \log(\frac{T^\alpha}{n})\mathds{1}_{n<T^\alpha},$$ for $r/s= n/m.$ The other terms can be handled similarly. Now we will take care of the Gamma factor: $$\sum_{\rho} \omega(\rho)(\frac{r}{s})^\rho \int_{(1-c)}  \frac{\chi'}{\chi}(z) W_\rho(z)\frac{dz}{2\pi i}.$$
We take the integral to $\textbf{Re}(s)= \textbf{Re}(\rho)$ and we get an error term of size $T^{\alpha \textbf{Re}(\rho)}|\rho|^{-2}.$ Considering the support of $\omega(\rho)$ this is bounded by $T^{-2+ \alpha}. $ Therefore we need to calculate
$$ \int_{(0)}  \frac{\chi'}{\chi}(\rho + z) W_{0}(z)\frac{dz}{2\pi i}. $$ We will use \eqref{ass-exp-chi} for the expansion of the gamma factor along with Lemma~\ref{Fejer} and similar to the proof of Theorem \ref{Thm} we get $$\alpha \log T \sum \omega(\rho) (\tfrac{r}{s})^\rho \log(\tfrac{\gamma_\rho}{2\pi}) + O(\frac{\sum \omega(\rho )x^{\textbf{Re}(\rho)} \log^5 T}{T}).$$
We apply the Lemma \ref{lemm-Land-log} and we have the contribution of the gamma factore is
$$-\alpha \log^2 T \frac{\Lambda(r/s)}{2\pi} + O(\frac{\sum \omega(\rho )x^{\textbf{Re}(\rho)} \log^5 T}{T}).$$
 The error is bounded by $T^{1-\alpha},$ this finishes the proof for $x\neq 1$. For $x=1$  by using \eqref{casex=1} we have
 
\begin{align*}
\notag \sum_{\rho}\sum_{\zeta(\rho^{\prime})=0} &   W_{\rho}(\rho^{\prime}) = - \sum_{ n<T^\alpha } \Lambda(n)\log\big(\frac{T^\alpha }{n}\big) \Big(\sum_{\rho}\frac{\omega(n)}{n^{\rho}}+ \frac{\omega(n)}{n^{1-\rho}}\Big) \\ &  -\sum_{\rho}\int_{(1-c)} \frac{\chi'}{\chi}(s) W_\rho(s)\frac{ds}{2\pi i} + \sum_{\rho} W_{\rho}(1) .\\ \notag
\end{align*}
Now applying the smooth version of the Landau-Gonek formula and the same treatment of the gamma factor as the case for $x\neq 1$ we get that the expression above is equal to
\begin{align*}
\frac{1}{\pi} & \sum_{ n<T^\alpha } \frac{\Lambda^2(n)}{n}\log\big(\frac{T^\alpha }{n}\big)+ \frac{\alpha \log^3 T}{2 \pi} + O(1) \\
& = \frac{1}{6\pi}(\alpha \log T)^3 + \frac{\alpha}{2\pi} \log^3 T + O(1).
\end{align*}
\end{proof}
Next, we will give a proof of Corollaries \ref{cor-simple} and \ref{cor-gap}.
We will prove these corollaries by showing that the effect of the zeros that violate the Riemann hypothesis are negligible in the LHS \eqref{eqthmx=1}. 

\begin{proof}
We assume the density hypothesis in the following form
\begin{equation}
\label{zero-den-hyp}
N(\sigma, T) \ll T^{2(1-\sigma)} \log^{-B}.
\end{equation}
Let us partition the set of zeros that off the critical line to
\begin{align*}
& \rho_i = \{\rho, 1-\rho : \tfrac{1}{2}+\frac{i}{\log T}< \textbf{Re}(\rho) \leq \tfrac{1}{2}+ \frac{i+1}{\log T} \} \\ &
\rho_{i, m}=  \{\rho \in \rho_i : m-1 < \textbf{Im}(\rho) \leq m \}
\end{align*}
Let $\rho= 1/2 + \sigma + i\gamma$ and $\rho^{\prime}= 1/2 - \sigma^{\prime} + i\gamma^{\prime}$ then we have  $$W_{\rho}(\rho^{\prime})= \frac{T^{\sigma+ \sigma^{\prime} + i(\gamma - \gamma^{\prime})}+ T^{-(\sigma+ \sigma^{\prime} + i(\gamma - \gamma^{\prime}))}-2}{(\sigma+ \sigma^\prime + i(\gamma- \gamma^\prime))^2},$$
Therfore if we consider the effect of zeros in $\rho_{i, m}$ and $\rho_{j, m^{\prime}}$ together we get that its bounded by
\begin{align*}
\frac{T^{i+j/\log T}}{1+ (m-m^{\prime})^2} \log^2 T
\end{align*}
For $m-m^{\prime}=h,$ where $m, m^\prime$ are in the effective range of $\omega,$ we have that the total effect is bounded by
\begin{align}
\label{cauchy-sch}
\notag \log^2 T\sum_{i,j} \frac{T^{i+j/\log T}}{h^2} & \sum_{m-m^{\prime}=h}|\rho_{i, m}||\rho_{j, m^\prime}| \\ &  \ll \sum_{i,j} \frac{T^{i+j/\log T}}{h^2} \Big(\sum_{m}|\rho_{i, m}|^2\Big)^{\tfrac{1}{2}} \Big(\sum_{m}|\rho_{j, m}|^2\Big)^{\tfrac{1}{2}}
\end{align}
Using the zero counting formula we have $|\rho_{i, m}|< \log T,$ therefore
$$\sum_{m}|\rho_{i, m}|^2 \ll \log^2 T \sum_m |\rho_{i, m}|< \log^{-B+2} T^{1-\tfrac{2i}{\log T}}.$$
The last inequality is by using the zero density hypothesis \eqref{zero-den-hyp}.
Putting this back into~\eqref{cauchy-sch} and apply it to
\begin{align}
  \sum_{\substack{\zeta(\rho)=0 \\ \textbf{Re}(\rho)\neq 1/2 } }  \omega(\rho)  \sum_{\substack{\zeta(\rho^{\prime})=0 \\ \textbf{Re}(\rho^{\prime})\neq 1/2 } }   W_\rho(\rho^{\prime}) 
\end{align}
  and we get that the total effect of zeros off the critical in the above line is bounded by $$T \log^{-B+7}$$ considering $|\omega(\rho)| \ll T^{-1}\log T. $ Therefore for $B>4$ this gets absorbed in the error term considering the main term in \eqref{eqthmx=1}  has size $\log^3 T.$ The rest of the proof goes the same as the proof of Corollary 2 in \cite{Mo}.
\end{proof}

E-mail address: farzad.aryan@mathematik.uni-goettingen.de

\end{document}